\date{\today}
\newtheorem{theorem}{Теорема}
\newtheorem{proposition}{Твердження}
\newtheorem{corollary}{Наслiдок}
\newtheorem{lemma}{Лема}
\theoremstyle{definition}
\newtheorem{example}{Приклад}
\newtheorem{remark}{Зауваження}
\newtheorem{definition}{Означення}
\begin{document}

\title[Про напiвгрупу, породжену розширеною бiциклiчною напiвгрупою ]{Про напiвгрупу, породжену розширеною бiциклiчною напiвгрупою та $\omega$-замкненою сiм'Єю}

\author[Олег~Гутік, Інна Позднякова]{Олег~Гутік, Інна Позднякова}
\address{Львівський національний університет ім. Івана Франка, Університецька 1, Львів, 79000, Україна}
\email{oleg.gutik@lnu.edu.ua,
ovgutik@yahoo.com, pozdnyakova.inna@gmail.com}

\keywords{Напівгрупа, розширена біциклічна напівгрупа, розширення.}

\subjclass[2020]{20M15,  20M50, 18B40.}

\begin{abstract}
Введено поняття алгебраїчне розширення $\boldsymbol{B}_{\mathbb{Z}}^{\mathscr{F}}$ розширеної біциклічної напівгрупи для довільної $\omega$-замк\-не\-ної сім'ї $\mathscr{F}$ підмножин в $\omega$. Доведено, що $\boldsymbol{B}_{\mathbb{Z}}^{\mathscr{F}}$ є комбінаторною інверсною напівгрупою. Описано відношення Ґріна, природний частковий порядок на напівгрупі $\boldsymbol{B}_{\mathbb{Z}}^{\mathscr{F}}$ та її множину ідемпотентів. Знайдено критерії  простоти, $0$-простоти, біпростоти та $0$-біпростоти напівгрупи $\boldsymbol{B}_{\mathbb{Z}}^{\mathscr{F}}$, а також коли напівгрупа $\boldsymbol{B}_{\mathbb{Z}}^{\mathscr{F}}$  ізоморфна розширеній біциклічній напівгрупі або зліченній напівгрупі матричних одиниць. Доведено, що у випадку, коли сім'я $\mathscr{F}$ складається з усіх одноточокових підмножини в $\omega$ та порожньої множини, то напівгрупа $\boldsymbol{B}_{\mathbb{Z}}^{\mathscr{F}}$  ізоморфна $\lambda$-розширенню Брандта напівґратки $(\omega,\min)$.

\bigskip
\noindent
\emph{Oleg Gutik, Inna Pozdnyakova, \textbf{On the semigroup generating by extended bicyclic semigroup and a $\omega$-closed family}.}

\smallskip
\noindent
The algebraic extension $\boldsymbol{B}_{\mathbb{Z}}^{\mathscr{F}}$ of the extended bicyclic semigroup for an arbitrary $\omega$-closed family $\mathscr{F}$ subsets of $\omega$ is introduced. It is proven that $\boldsymbol{B}_{\mathbb{Z}}^{\mathscr{F}}$ is a combinatorial inverse semigroup. Green's relations, the natural partial order on the semigroup $\boldsymbol{B}_{\mathbb{Z}}^{\mathscr{F}}$ and its set of idempotents are described. The criteria of simplicity, $0$-simplicity, bisimplicity, $0$-bisimplicity of the semigroup $\boldsymbol{B}_{\mathbb{Z}}^{\mathscr{F}}$ and the criterion for $\boldsymbol{B}_{\mathbb{Z}}^{\mathscr{F}}$ to be isomorphic to the extended bicyclic semigroup or the countable semigroup of matrix units are derived. It is proved that in the case when the family $\mathscr{F}$ consists of all singletons of $\omega$ and the empty set, the semigroup $\boldsymbol{B}_{\mathbb{Z}}^{\mathscr{F}}$ is isomorphic to the Brandt $\lambda$-extension of the semilattice $(\omega,\min)$.
\end{abstract}

\maketitle


\section{Вступ}\label{section-1}

У цій роботі будемо користуватися термінологією з \cite{Clifford-Preston-1961, Clifford-Preston-1967, Lawson-1998, Petrich-1984}.
Множину цілих чисел  позначатимемо через $\mathbb{Z}$, а множину невід'ємних цілих чисел  позначатимемо через $\omega$. Для довільного $k\in\mathbb{Z}$ позначимо $[k)=\{i\in\mathbb{Z}\colon i\geqslant k\}$.

Підмножину $A$ в $\omega$ називають \emph{індуктивною}, якщо з того, що $i\in A$ випливає, що $i+1\in A$. Очевидно, що $\varnothing$ --- індуктивна множина в $\omega$, і непорожня підмножина $A\subseteq\omega$ є індук\-тивною тоді і лише тоді, коли $A=[k)$ для деякого $k\in\omega$.

Якщо $S$~--- напівгрупа, то її підмножина ідемпотентів позначається через $E(S)$.  На\-пів\-гру\-пу $S$ називають \emph{інверсною}, якщо для довільного її елемента $x$ існує єдиний елемент $x^{-1}\in S$ такий, що $xx^{-1}x=x$ та $x^{-1}xx^{-1}=x^{-1}$ \cite{Petrich-1984, Wagner-1952}. В інверсній напівгрупі $S$ вище означений елемент $x^{-1}$ називається \emph{інверсним до} $x$. \emph{В'язка}~--- це напівгрупа ідемпотентів, а \emph{напівґратка}~--- це комутативна в'язка.

Якщо $S$ --- напівгрупа, то ми позначатимемо відношення Ґріна на $S$ через $\mathscr{R}$, $\mathscr{L}$, $\mathscr{D}$, $\mathscr{H}$ і $\mathscr{J}$ (див. означення в \cite[\S2.1]{Clifford-Preston-1961} або \cite{Green-1951}). Напівгрупа $S$ називається \emph{простою}, якщо $S$ не містить власних двобічних ідеалів, тобто $S$ складається з одного $\mathscr{J}$-класу, і \emph{біпростою}, якщо $S$ складається з одного $\mathscr{D}$-класу.

Відношення еквівалентності $\mathfrak{K}$ на напівгрупі $S$ називається \emph{конгруенцією}, якщо для елементів $a$ та $b$ напівгрупи $S$ з того, що виконується умова $(a,b)\in\mathfrak{K}$ випливає, що $(ca,cb), (ad,bd) \in\mathfrak{K}$, для довільних $c,d\in S$. Відношення $(a,b)\in\mathfrak{K}$ ми також будемо записувати $a\mathfrak{K}b$, і в цьому випадку будемо говорити, що \emph{елементи $a$ i $b$ є $\mathfrak{K}$-еквівалентними}.

Якщо $S$~--- напівгрупа, то на $E(S)$ визначено частковий порядок:
$
e\preccurlyeq f
$   тоді і лише тоді, коли
$ef=fe=e$.
Так означений частковий порядок на $E(S)$ називається \emph{при\-род\-ним}.

Означимо відношення $\preccurlyeq$ на інверсній напівгрупі $S$ так:
$
    s\preccurlyeq t
$
тоді і лише тоді, коли $s=te$.
для деякого ідемпотента $e\in S$. Так означений частковий порядок назива\-єть\-ся \emph{при\-род\-ним част\-ковим порядком} на інверсній напівгрупі $S$~\cite{Wagner-1952}. Очевидно, що звуження природного часткового порядку $\preccurlyeq$ на інверсній напівгрупі $S$ на її в'язку $E(S)$ є при\-род\-ним частковим порядком на $E(S)$.

Нехай $\lambda$ --- довільний ненульовий кардинал. Відображення $\alpha$ за підмножини $D\subseteq\lambda$ в кардинал $\lambda$ називається \emph{частковим перетворенням} кардинала $\lambda$. У цьому випадку множина $D$ називається \emph{областю визначення} часткового перетворення $\alpha$ і позначається $\operatorname{dom}\alpha$. Образ елемента $x\in\operatorname{dom}\alpha$ стосовно $\alpha$ ми будемо  позначати через $x\alpha$. Множина $\{ x\in \lambda\colon y\alpha=x \mbox{ для деякого } y\in\lambda\}$ називається \emph{областю значень} часткового перетворення $\alpha$ і позначається $\operatorname{ran}\alpha$. Для зручності позначимо через $\varnothing$ порожнє перетворення, тобто таке часткове перетворення з $\operatorname{dom}\varnothing=\operatorname{ran}\varnothing=\varnothing$.

Нехай $\mathscr{I}_\lambda$ --- множина всіх взаємно однозначних часткових перетворень кардинала $\lambda$ разом з такою напівгруповою операцією:
\begin{equation*}
    x(\alpha\beta)=(x\alpha)\beta \quad \mbox{якщо} \quad
    x\in\operatorname{dom}(\alpha\beta)=\{
    y\in\operatorname{dom}\alpha\colon
    y\alpha\in\operatorname{dom}\beta\}, \qquad \mbox{для} \quad
    \alpha,\beta\in\mathscr{I}_\lambda.
\end{equation*}
Напівгрупа $\mathscr{I}_\lambda$ називається \emph{симетричною інверсною напівгрупою} (\emph{симетричним інверсним моноїдом}) над кардиналом $\lambda$~(див. \cite{Clifford-Preston-1961}). Симетрична інверсна напівгрупа введена В. В.~Вагнером~\cite{Wagner-1952} і вона відіграє важливу роль у теорії напівгруп.

Нагадаємо (див.  \cite[\S1.12]{Clifford-Preston-1961}, що \emph{біциклічною напівгрупою} (або \emph{біциклічним моноїдом}) ${\mathscr{C}}(p,q)$ називається напівгрупа з одиницею, породжена двоелементною мно\-жи\-ною $\{p,q\}$ і визначена одним  співвідношенням $pq=1$. Біциклічна на\-пів\-група відіграє важливу роль у теорії
на\-півгруп. Так, зокрема, класична теорема О.~Ан\-дерсена \cite{Andersen-1952}  стверджує, що {($0$-)}прос\-та напівгрупа з (ненульовим) ідем\-по\-тен\-том є цілком {($0$-)}прос\-тою тоді і лише тоді, коли вона не містить ізоморфну копію бі\-циклічного моноїда. Різні розширення біциклічного моноїда вводилися раніше різ\-ни\-ми авторами \cite{Fortunatov-1976, Fotedar-1974, Fotedar-1978, Warne-1967}. Такими є, зокрема, конструкції Брука та Брука--Рейлі занурення напівгруп у прості та описання інверсних біпростих і $0$-біпростих $\omega$-напівгруп \cite{Bruck-1958, Reilly-1966, Warne-1966, Gutik-2018}.

\begin{remark}\label{remark-1.1}
Легко бачити, що біциклічний моноїд ${\mathscr{C}}(p,q)$ ізоморфний напівгрупі, заданій на множині $\boldsymbol{B}_{\omega}=\omega\times\omega$ з напівгруповою операцією
\begin{equation}\label{eq-1.1}
  (i_1,j_1)\cdot(i_2,j_2)=(i_1+i_2-\min\{j_1,i_2\},j_1+j_2-\min\{j_1,i_2\})=
\left\{
  \begin{array}{ll}
    (i_1-j_1+i_2,j_2), & \hbox{якщо~} j_1<i_2;\\
    (i_1,j_2), & \hbox{якщо~} j_1=i_2;\\
    (i_1,j_1-i_2+j_2), & \hbox{якщо~} j_1>i_2.
  \end{array}
\right.
\end{equation}
\end{remark}

Множина $\boldsymbol{B}_{\mathbb{Z}}=\mathbb{Z}\times\mathbb{Z}$ з напівгруповою операцією \eqref{eq-1.1} називається \emph{розширеною біциклічною напівгрупою} \cite{Warne-1968}. Очевидно, шо $\boldsymbol{B}_{\omega}$ --- піднапівгрупа напівгрупи $\boldsymbol{B}_{\mathbb{Z}}$.

\begin{remark}\label{remark-1.2}
Нехай $\alpha$ i $\beta$~--- часткові перетворення множини $\omega$, які визначаються так:
\begin{align*}
  &\operatorname{dom}\alpha=\omega,  &&\operatorname{ran}\alpha=\omega\setminus\{0\}, &&(n)\alpha=n+1, \\
  &\operatorname{dom}\beta=\omega\setminus\{0\}, &&\operatorname{ran}\beta=\omega, &&(n+1)\beta=n.
\end{align*}

Легко бачити, що біциклічний моноїд ${\mathscr{C}}(p,q)$ ізоморфний напівгрупі $\mathscr{C}_\omega$, яка породжена елементами $\alpha$ i $\beta$ (див. \cite[вправа~IV.1.11$(ii)$]{Petrich-1984}).
\end{remark}

\begin{definition}\label{definition-1.3}
Для довільних $i,j\in\mathbb{Z}$ означимо часткове перетворення $\alpha^i_j\colon\mathbb{Z}\rightharpoonup\mathbb{Z}$ так:
\begin{equation*}
  \operatorname{dom}\alpha^i_j=[i), \qquad \operatorname{ran}\alpha^i_j=[j) \qquad \hbox{i} \qquad (i+n)\alpha^i_j=j+n, \quad \hbox{для всіх} \quad i+n\in\operatorname{dom}\alpha^i_j.
\end{equation*}
Тоді множина $\boldsymbol{B}(\mathbb{Z})=\left\{\alpha^i_j\colon i,j\in\mathbb{Z}\right\}$ стосовно операції композиції часткових перетворень множини цілих чисел $\mathbb{Z}$ є інверсною напівгрупою (див. \cite{Fortunatov-1976, Fotedar-1974, Fotedar-1978, Gutik-Pagon-Pavlyk-2011}), а отже є інверсною піднапівгрупою симетричного інверсного моноїда $\mathscr{I}_\omega$.
\end{definition}

Напівгрупова операція на $\boldsymbol{B}(\mathbb{Z})$ визначається за формулою
\begin{equation*}
  \alpha^{i_1}_{j_1}\circ\alpha^{i_2}_{j_2}=\alpha^i_j, \quad \hbox{де} \quad i=i_1+i_2-\min\{j_1,i_2\} \quad \hbox{i} \quad j=j_1+j_2-\min\{j_1,i_2\}.
\end{equation*}
(див. \cite[формула (1)]{Gutik-Pagon-Pavlyk-2011}). Отже, виконується

\begin{proposition}\label{proposition-1.4}
Відображення $\mathfrak{h}\colon \boldsymbol{B}(\mathbb{Z})\to \boldsymbol{B}_{\mathbb{Z}}$, $\alpha^i_j\mapsto (i,j)$ є ізоморфізмом.
\end{proposition}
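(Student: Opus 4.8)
The plan is to verify directly that $\mathfrak{h}$ is a bijection and a semigroup homomorphism; since a bijective semigroup homomorphism is automatically an isomorphism (its set-theoretic inverse preserves the operation), this suffices.

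First I would establish bijectivity. Surjectivity is immediate from the definition of $\boldsymbol{B}(\mathbb{Z})$: every pair $(i,j)\in\mathbb{Z}\times\mathbb{Z}=\boldsymbol{B}_{\mathbb{Z}}$ is the image of the element $\alpha^i_j$. For injectivity, observe that the partial bijection $\alpha^i_j$ determines the pair $(i,j)$ uniquely, because $i=\min\operatorname{dom}\alpha^i_j$ and $j=\min\operatorname{ran}\alpha^i_j$; hence $\alpha^{i_1}_{j_1}=\alpha^{i_2}_{j_2}$ forces $(i_1,j_1)=(i_2,j_2)$, so $\mathfrak{h}$ is one-to-one.

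Next I would check that $\mathfrak{h}$ preserves the operation, i.e. that $\mathfrak{h}\bigl(\alpha^{i_1}_{j_1}\circ\alpha^{i_2}_{j_2}\bigr)=\mathfrak{h}(\alpha^{i_1}_{j_1})\cdot\mathfrak{h}(\alpha^{i_2}_{j_2})$ for all indices. By the composition rule for $\boldsymbol{B}(\mathbb{Z})$ recalled above, $\alpha^{i_1}_{j_1}\circ\alpha^{i_2}_{j_2}=\alpha^i_j$ with $i=i_1+i_2-\min\{j_1,i_2\}$ and $j=j_1+j_2-\min\{j_1,i_2\}$, so the left-hand side equals $(i_1+i_2-\min\{j_1,i_2\},\,j_1+j_2-\min\{j_1,i_2\})$. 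The right-hand side, computed via \eqref{eq-1.1}, is literally the same pair. Thus the two sides coincide componentwise and $\mathfrak{h}$ is a homomorphism, completing the proof.

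Should one prefer a self-contained argument that does not cite the composition rule, the only genuine computation is its derivation from the definition of $\alpha^i_j$ as a partial bijection of $\mathbb{Z}$. One computes $\operatorname{dom}\bigl(\alpha^{i_1}_{j_1}\circ\alpha^{i_2}_{j_2}\bigr)=\{n\geqslant i_1\colon (n-i_1)+j_1\geqslant i_2\}=[\max\{i_1,\,i_1+i_2-j_1\})$ and simplifies $\max\{i_1,\,i_1+i_2-j_1\}=i_1+i_2-\min\{j_1,i_2\}$ by splitting on whether $j_1\leqslant i_2$; a symmetric computation of the range together with the check that the composite maps $i+n\mapsto j+n$ identifies it with $\alpha^i_j$. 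This case split on $\min\{j_1,i_2\}$ is the only mildly delicate point, and it is entirely routine, so I expect no substantial obstacle in the proof.
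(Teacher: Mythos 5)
Your proof is correct and takes essentially the same route as the paper, which deduces the proposition immediately from the composition rule $\alpha^{i_1}_{j_1}\circ\alpha^{i_2}_{j_2}=\alpha^{i}_{j}$ with $i=i_1+i_2-\min\{j_1,i_2\}$ and $j=j_1+j_2-\min\{j_1,i_2\}$, which visibly coincides with the multiplication \eqref{eq-1.1} on $\boldsymbol{B}_{\mathbb{Z}}$. Your explicit checks of bijectivity (via $i=\min\operatorname{dom}\alpha^i_j$, $j=\min\operatorname{ran}\alpha^i_j$) and your derivation of the composition rule are precisely the routine details the paper leaves implicit, and they are carried out correctly.
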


\section{Конструкція напівгрупи $\boldsymbol{B}_\mathbb{Z}^\mathscr{F}$}\label{section-2}

Нехай $\mathscr{P}(\omega)$~--- сім'я усіх підмножин ординала $\omega$. Для довільних $F\in\mathscr{P}(\omega)$ i $n,m\in\mathbb{Z}$ покладемо $n-m+F=\{n-m+k\colon k\in F\}$, якщо $F\neq\varnothing$, i  $n-m+F=\varnothing$ при $F=\varnothing$.
Будемо говорити, що підсім'я  $\mathscr{F}\subseteq\mathscr{P}(\omega)$ є \emph{${\omega}$-замкненою}, якщо $F_1\cap(-n+F_2)\in\mathscr{F}$ для довільних $n\in\omega$ i $F_1,F_2\in\mathscr{F}$.

Нехай $\alpha^{i_1}_{j_1},\alpha^{i_2}_{j_2}$~--- довільні елементи напівгрупи $\boldsymbol{B}(\mathbb{Z})$ та $\overline{F}_1$ i $\overline{F}_2$~--- довільні підмножини в $\operatorname{dom}\alpha^{i_1}_{j_1}$ i $\operatorname{dom}\alpha^{i_2}_{j_2}$, відповідно. Позначимо через $\alpha^{i_1}_{j_1}|_{\overline{F}_1}$ i $\alpha^{i_2}_{j_2}|_{\overline{F}_2}$ звуження часткових перетворень $\alpha^{i_1}_{j_1}$ i $\alpha^{i_2}_{j_2}$ на множини $\overline{F}_1$ i $\overline{F}_2$, відповідно. Тоді з означення композиції часткових перетворень випливає, що
\begin{align*}
   \operatorname{dom}\left(\alpha^{i_1}_{j_1}|_{\overline{F}_1}\circ\alpha^{i_2}_{j_2}|_{\overline{F}_2}\right)
   &=\operatorname{dom}\left(\alpha^{i_1}_{j_1}|_{\overline{F}_1}\right)\cap \left(\overline{F}_2\right)\left(\alpha^{i_1}_{j_1}|_{\overline{F}_1}\right)^{-1}=
   \overline{F}_1\cap\left(\overline{F}_2\right)\left(\alpha^{i_1}_{j_1}|_{\overline{F}_1}\right)^{-1}=\\
   &=\overline{F}_1\cap \left(\overline{F}_2\right)\left(\alpha^{j_1}_{i_1}|_{(\overline{F}_1)\alpha^{i_1}_{j_1}}\right)=
   \overline{F}_1\cap \left(\left(\overline{F}_2\right)\alpha^{j_1}_{i_1}\cap\overline{F}_1\right)=
   \overline{F}_1\cap \left(\overline{F}_2\right)\alpha^{j_1}_{i_1}=\\
   &=\overline{F}_1\cap \left(\overline{F}_2\right)\left(\alpha^{i_1}_{j_1}\right)^{-1}.
\end{align*}

Позаяк $\overline{F}_1\subseteq \operatorname{dom}\alpha^{i_1}_{j_1}$ i $\overline{F}_2\subseteq \operatorname{dom}\alpha^{i_2}_{j_2}$, то існують множини $F_1,F_2\in \mathscr{P}(\omega)$ такі, що $\overline{F}_1=i_1+F_1$ i $\overline{F}_2=i_2+F_2$. Тоді
\begin{align*}
   \operatorname{dom}\left(\alpha^{i_1}_{j_1}|_{\overline{F}_1}\circ\alpha^{i_2}_{j_2}|_{\overline{F}_2}\right)
   &=\overline{F}_1\cap \left(\overline{F}_2\right)\left(\alpha^{i_1}_{j_1}\right)^{-1}=
   \left(i_1+F_1\right)\cap \left(i_2+F_2\right)\left(\alpha^{i_1}_{j_1}\right)^{-1}=\\
   &=\left\{
  \begin{array}{ll}
    i_1-j_1+i_2+((j_1-i_2+F_1)\cap F_2), & \hbox{якщо~~} j_1<i_2;\\
    i_1+(F_1\cap F_2),                   & \hbox{якщо~~} j_1=i_2;\\
    i_1+(F_1\cap (i_2-j_1+F_2)),         & \hbox{якщо~~} j_1>i_2.
  \end{array}
\right.
\end{align*}

Нехай $\boldsymbol{B}_{\mathbb{Z}}$~--- розширена біциклічна напівгрупа та $\mathscr{F}$ --- ${\omega}$-замкнена підсім'я в  $\mathscr{P}(\omega)$. На множині $\boldsymbol{B}_{\mathbb{Z}}\times\mathscr{F}$ означимо бінарну операцію ``$\cdot$'' за формулою
\begin{equation}\label{eq-2.1}
  (i_1,j_1,F_1)\cdot(i_2,j_2,F_2)=
  \left\{
    \begin{array}{ll}
      (i_1-j_1+i_2,j_2,(j_1-i_2+F_1)\cap F_2), & \hbox{якщо~~} j_1<i_2;\\
      (i_1,j_2,F_1\cap F_2)                    & \hbox{якщо~~} j_1=i_2;\\
      (i_1,j_1-i_2+j_2,F_1\cap (i_2-j_1+F_2)), & \hbox{якщо~~} j_1>i_2.
    \end{array}
  \right.
\end{equation}

\begin{remark}
Якщо $\boldsymbol{B}_{\omega}$~--- біциклічний моноїд і $\mathscr{F}$ --- ${\omega}$-замкнена підсім'я в  $\mathscr{P}(\omega)$, то на множині $\boldsymbol{B}_{\omega}\times\mathscr{F}$ бінарна операція \eqref{eq-2.1} є асоціативною (див. твердження 1 \cite{Gutik-Mykhalenych-2020}).
\end{remark}

Оскільки довільні три елементи $(i_1,j_1)$, $(i_2,j_2)$, $(i_3,j_3)$ розширеної біциклічної напівгрупи $\boldsymbol{B}(\mathbb{Z})$ та їх добуток містяться в її піднапівгрупі  $\boldsymbol{B}^k(\mathbb{Z})=\left\{(i,j)\in\boldsymbol{B}(\mathbb{Z})\colon i,j\geqslant k\right\}$, де $k=\min\{i_1,j_1,i_2,j_2, i_3,j_3\}$, яка за твердженням 2.11$(viii)$~\cite{Fihel-Gutik-2011} ізоморфна біциклічному моноїду, то з твердження 1 в \cite{Gutik-Mykhalenych-2020} випливає, що бінарна операція \eqref{eq-2.1} на $\boldsymbol{B}_{\mathbb{Z}}\times\mathscr{F}$ є асоціативною, а отже виконується

\begin{proposition}\label{proposition-2.1}
Якщо сім'я  $\mathscr{F}\subseteq\mathscr{P}(\omega)$ є ${\omega}$-замкненою, то $(\boldsymbol{B}_{\mathbb{Z}}\times\mathscr{F},\cdot)$ є напівгрупою.
\end{proposition}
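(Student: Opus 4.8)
The plan is to verify two things about the operation \eqref{eq-2.1}: first, that it is well defined, i.e. the third coordinate of every product again belongs to $\mathscr{F}$; and second, that it is associative. Closure is precisely where the hypothesis that $\mathscr{F}$ is $\omega$-closed enters, while associativity I would deduce from the already established case of $\boldsymbol{B}_\omega\times\mathscr{F}$.

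For well-definedness I would inspect the three branches of \eqref{eq-2.1} separately. In the middle branch $j_1=i_2$ the third coordinate is $F_1\cap F_2=F_1\cap(-0+F_2)$, which lies in $\mathscr{F}$ by $\omega$-closedness with $n=0$. In the branch $j_1>i_2$ put $n=j_1-i_2\in\omega$; then $i_2-j_1+F_2=-n+F_2$, so the third coordinate $F_1\cap(i_2-j_1+F_2)=F_1\cap(-n+F_2)\in\mathscr{F}$. In the branch $j_1<i_2$ put $m=i_2-j_1\in\omega$; then $j_1-i_2+F_1=-m+F_1$, and since intersection is commutative the third coordinate $(j_1-i_2+F_1)\cap F_2=F_2\cap(-m+F_1)\in\mathscr{F}$, again by $\omega$-closedness (now with the roles of $F_1$ and $F_2$ interchanged). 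Thus the product always stays inside $\boldsymbol{B}_{\mathbb{Z}}\times\mathscr{F}$.

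For associativity I would reduce to the monoid case. Given three elements $(i_1,j_1,F_1)$, $(i_2,j_2,F_2)$, $(i_3,j_3,F_3)$, set $k=\min\{i_1,j_1,i_2,j_2,i_3,j_3\}$, so that all six indices are $\geqslant k$ and the three elements lie in $\boldsymbol{B}^k(\mathbb{Z})\times\mathscr{F}$. A short computation with \eqref{eq-1.1} shows that $\boldsymbol{B}^k(\mathbb{Z})$ is closed under multiplication, so (together with the closure established above) both triple products are formed entirely inside $\boldsymbol{B}^k(\mathbb{Z})\times\mathscr{F}$. I would then consider the shift map
\[
  \Phi_k\colon \boldsymbol{B}^k(\mathbb{Z})\times\mathscr{F}\to\boldsymbol{B}_\omega\times\mathscr{F},\qquad (i,j,F)\mapsto(i-k,j-k,F),
\]
which is a bijection. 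The point is that $\Phi_k$ carries \eqref{eq-2.1} to \eqref{eq-2.1}: the first two coordinates transform by the isomorphism $\boldsymbol{B}^k(\mathbb{Z})\cong\boldsymbol{B}_\omega$ furnished by Proposition 2.11$(viii)$ of \cite{Fihel-Gutik-2011}, while the third coordinate is untouched, because the amounts by which $F_1$ and $F_2$ are translated in \eqref{eq-2.1} depend only on the difference $j_1-i_2$, which is invariant under subtracting $k$ from both indices; likewise the branch of \eqref{eq-2.1} is selected by the sign of $j_1-i_2$, again shift-invariant. Hence $\Phi_k$ is an operation-preserving bijection, and since $\boldsymbol{B}_\omega\times\mathscr{F}$ is a semigroup by Proposition 1 of \cite{Gutik-Mykhalenych-2020}, the images under $\Phi_k$ of the three elements associate; applying $\Phi_k^{-1}$ returns the associativity identity for the original triple.

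The genuinely load-bearing step is the shift-invariance of the third coordinate of \eqref{eq-2.1}, i.e. the verification that $\Phi_k$ really does intertwine the two copies of the operation; once this is in hand, the problem collapses onto the known associativity of $\boldsymbol{B}_\omega\times\mathscr{F}$, and I expect no difficulty in the closure argument. A fully self-contained alternative would be to expand both $((i_1,j_1,F_1)\cdot(i_2,j_2,F_2))\cdot(i_3,j_3,F_3)$ and $(i_1,j_1,F_1)\cdot((i_2,j_2,F_2)\cdot(i_3,j_3,F_3))$ directly, but this splits into many sub-cases according to the orderings among $j_1,i_2$ and $j_2,i_3$, so the reduction via $\Phi_k$ is the cleaner path.
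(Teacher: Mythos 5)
Your proof is correct and takes essentially the same route as the paper: the paper likewise handles an arbitrary triple by placing it in $\boldsymbol{B}^k(\mathbb{Z})\times\mathscr{F}$ with $k=\min\{i_1,j_1,i_2,j_2,i_3,j_3\}$ and transporting associativity from $\boldsymbol{B}_{\omega}\times\mathscr{F}$ via the isomorphism $\boldsymbol{B}^k(\mathbb{Z})\cong\boldsymbol{B}_{\omega}$ of Proposition 2.11$(viii)$ of \cite{Fihel-Gutik-2011} together with Proposition 1 of \cite{Gutik-Mykhalenych-2020}. Your explicit check of closure of the third coordinate and of the intertwining property of the shift $\Phi_k$ only spells out details the paper leaves implicit.
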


Припустимо, що ${\omega}$-замкнена сім'я $\mathscr{F}\subseteq\mathscr{P}(\omega)$ містить порожню множину $\varnothing$, то з означення напівгрупової операції в $(\boldsymbol{B}_{\mathbb{Z}}\times\mathscr{F},\cdot)$ випливає, що множина
$ 
  \boldsymbol{I}=\{(i,j,\varnothing)\colon i,j\in\mathbb{Z}\}
$ 
є ідеалом напівгрупи $(\boldsymbol{B}_{\mathbb{Z}}\times\mathscr{F},\cdot)$.

\begin{definition}\label{definition-2.2}
Для довільної ${\omega}$-замкненої сім'ї $\mathscr{F}\subseteq\mathscr{P}(\omega)$ означимо
\begin{equation*}
  \boldsymbol{B}_{\mathbb{Z}}^{\mathscr{F}}=
\left\{
  \begin{array}{ll}
    (\boldsymbol{B}_{\mathbb{Z}}\times\mathscr{F},\cdot)/\boldsymbol{I}, & \hbox{якщо~} \varnothing\in\mathscr{F};\\
    (\boldsymbol{B}_{\mathbb{Z}}\times\mathscr{F},\cdot),                & \hbox{якщо~} \varnothing\notin\mathscr{F}.
  \end{array}
\right.
\end{equation*}
\end{definition}

\begin{remark}
Очевидно, що для довільної ${\omega}$-замкненої сім'ї $\mathscr{F}\subseteq\mathscr{P}(\omega)$ напівгрупа  $\boldsymbol{B}_{\omega}^{\mathscr{F}}$, введена в \cite{Gutik-Mykhalenych-2020}, є інверсною піднапівгрупою в $\boldsymbol{B}_{\mathbb{Z}}^{\mathscr{F}}$.
\end{remark}

У наступному параграфі за аналогією з \cite{Gutik-Mykhalenych-2020}, вивчаються властивості алгебраїчного розширення  $\boldsymbol{B}_{\mathbb{Z}}^{\mathscr{F}}$ розширеної біциклічної напівгрупи для довільної $\omega$-замкненої сім'ї $\mathscr{F}$ підмножин в $\omega$.
\section{Алгебраїчні властивості напівгрупи $\boldsymbol{B}_{\mathbb{Z}}^{\mathscr{F}}$}\label{section-3}

Надалі будемо вважати, що $\mathscr{F}$ --- ${\omega}$-замкнена підсім'я в  $\mathscr{P}(\omega)$.

Доведення тверджень наступної леми проводяться звичайною перевіркою, аналогічно відповідним твердженням в \S~3 з \cite{Gutik-Mykhalenych-2020}.

\begin{lemma}\label{lemma-3.1}
Нехай $\mathscr{F}$ --- $\omega$-замкнена сім'я в $\mathscr{P}(\omega)$. Тоді:
\begin{enumerate}
  \item\label{lemma-3.1(1)} Напівгрупа $\boldsymbol{B}_{\mathbb{Z}}^{\mathscr{F}}$ містить нуль  $\boldsymbol{0}$ тоді і лише тоді, коли $\varnothing\in\mathscr{F}$, причому нуль $\boldsymbol{0}$ є образом ідеала $\boldsymbol{I}$ при природному гомоморфізмі, породженому конгруенцією Ріса
\begin{equation*}
\boldsymbol{\mathfrak{C}}_{\boldsymbol{I}}=\left\{(x,x)\colon x\in\boldsymbol{B}_{\mathbb{Z}}\times\mathscr{F} \right\}\cup(\boldsymbol{I}\times\boldsymbol{I})
\end{equation*}
на напівгрупі $(\boldsymbol{B}_{\mathbb{Z}}\times\mathscr{F},\cdot)$.
  \item\label{lemma-3.1(2)} Ненульовий елемент $(i,j,F)$ напівгрупи $\boldsymbol{B}_{\mathbb{Z}}^{\mathscr{F}}$ є ідемпотентом тоді і лише тоді, коли $i=j$.
  \item\label{lemma-3.1(3)} Ідемпотенти в $\boldsymbol{B}_{\mathbb{Z}}^{\mathscr{F}}$ комутують.
  \item\label{lemma-3.1(4)} Елементи $(i,j,F)$ і $(j,i,F)$ є інверсними в  $\boldsymbol{B}_{\mathbb{Z}}^{\mathscr{F}}$.
\end{enumerate}
\end{lemma}

З теореми 1.17 \cite{Clifford-Preston-1961} та тверджень \eqref{lemma-3.1(3)} і \eqref{lemma-3.1(4)} леми \ref{lemma-3.1} випливає

\begin{theorem}\label{theorem-3.2}
Якщо $\mathscr{F}$ --- ${\omega}$-замкнена підсім'я в  $\mathscr{P}(\omega)$, то $\boldsymbol{B}_{\mathbb{Z}}^{\mathscr{F}}$ --- інверсна напівгрупа.
\end{theorem}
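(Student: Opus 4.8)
The plan is to separate the statement into its two assertions --- that $\boldsymbol{B}_{\mathbb{Z}}^{\mathscr{F}}$ is inverse and that it is combinatorial --- and to settle the first directly from Lemma \ref{lemma-3.1}. By item \eqref{lemma-3.1(4)} every element $(i,j,F)$ admits the inverse $(j,i,F)$ (and the zero $\boldsymbol{0}$, present exactly when $\varnothing\in\mathscr{F}$, is self-inverse), so $\boldsymbol{B}_{\mathbb{Z}}^{\mathscr{F}}$ is regular; by item \eqref{lemma-3.1(3)} its idempotents commute. The classical criterion, Theorem 1.17 of \cite{Clifford-Preston-1961}, then gives at once that $\boldsymbol{B}_{\mathbb{Z}}^{\mathscr{F}}$ is an inverse semigroup, so that each element has a unique inverse --- the one just exhibited.

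For combinatoriality I would compute Green's relation $\mathscr{H}$ and check that each $\mathscr{H}$-class is a singleton. Since the semigroup is now known to be inverse, $a\,\mathscr{H}\,b$ holds if and only if $aa^{-1}=bb^{-1}$ and $a^{-1}a=b^{-1}b$, that is, $a$ and $b$ carry the same left and right projections. A short computation from \eqref{eq-2.1}, using the inverse supplied by item \eqref{lemma-3.1(4)}, yields
\begin{equation*}
  (i,j,F)(i,j,F)^{-1}=(i,i,F) \qquad\text{and}\qquad (i,j,F)^{-1}(i,j,F)=(j,j,F)
\end{equation*}
for every non-zero class $(i,j,F)$, so that $F\neq\varnothing$. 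Consequently $(i,j,F)\,\mathscr{H}\,(i',j',F')$ forces $(i,i,F)=(i',i',F')$ and $(j,j,F)=(j',j',F')$, hence $i=i'$, $j=j'$ and $F=F'$, and the two elements coincide. The zero, when it exists, is alone in its $\mathscr{H}$-class. Thus $\mathscr{H}$ is the identity relation and $\boldsymbol{B}_{\mathbb{Z}}^{\mathscr{F}}$ is combinatorial.

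The point deserving the most care --- routine, but easy to get wrong --- is the bookkeeping imposed by Definition \ref{definition-2.2}: when $\varnothing\in\mathscr{F}$ the underlying set is the Rees quotient of $(\boldsymbol{B}_{\mathbb{Z}}\times\mathscr{F},\cdot)$ modulo $\boldsymbol{I}$, so I must verify that the inverse and the two projections computed above are well defined on classes and that the triples $(i,j,F)$ with $F\neq\varnothing$ remain pairwise distinct there. The real content of the combinatorial claim is the injectivity of the assignment $a\mapsto(aa^{-1},a^{-1}a)$ on the non-zero elements, which works precisely because $F\neq\varnothing$ lets one recover the third coordinate from either projection. All of this rests on \eqref{eq-2.1} being a well-defined associative operation, which is where the $\omega$-closedness of $\mathscr{F}$ enters, through Proposition \ref{proposition-2.1}.
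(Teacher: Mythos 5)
Your first paragraph is precisely the paper's proof: Theorem \ref{theorem-3.2} asserts only that $\boldsymbol{B}_{\mathbb{Z}}^{\mathscr{F}}$ is an inverse semigroup, and the paper obtains this exactly as you do, from parts \eqref{lemma-3.1(3)} and \eqref{lemma-3.1(4)} of Lemma \ref{lemma-3.1} together with Theorem~1.17 of \cite{Clifford-Preston-1961}. Your second half --- showing $\mathscr{H}$ is trivial via the projections $(i,j,F)\cdot(i,j,F)^{-1}=(i,i,F)$ and $(i,j,F)^{-1}\cdot(i,j,F)=(j,j,F)$ --- is correct but proves more than this statement claims; the paper defers combinatoriality to Theorem \ref{theorem-3.5}$\boldsymbol{(iii)}$ and Corollary \ref{corollary-3.6}, where it is established by essentially the same computation.
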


Наступне твердження описує природний частковий порядок на напівгрупі $\boldsymbol{B}_{\mathbb{Z}}^{\mathscr{F}}$.

\begin{proposition}\label{proposition-3.3}
Нехай $(i_1,j_1,F_1)$ i $(i_2,j_2,F_2)$~--- ненульові елементи напівгрупи $\boldsymbol{B}_{\mathbb{Z}}^{\mathscr{F}}$. Тоді  \linebreak $(i_1,j_1,F_1)\preccurlyeq(i_2,j_2,F_2)$ тоді і лише тоді, коли $F_1\subseteq -k+F_2$ й $i_1-i_2=j_1-j_2=k$ для деякого $k\in\omega$.
\end{proposition}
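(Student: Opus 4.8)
The plan is to argue directly from the definition of the natural partial order: in an inverse semigroup $s\preccurlyeq t$ holds if and only if $s=te$ for some idempotent $e$. By Lemma~\ref{lemma-3.1}\eqref{lemma-3.1(2)} the idempotents of $\boldsymbol{B}_{\mathbb{Z}}^{\mathscr{F}}$ are precisely the triples $(m,m,G)$ with $m\in\mathbb{Z}$ and $G\in\mathscr{F}$, so the entire statement reduces to analysing the product $(i_2,j_2,F_2)\cdot(m,m,G)$ by means of formula~\eqref{eq-2.1}. Throughout I work with non-zero representatives in $\boldsymbol{B}_{\mathbb{Z}}\times\mathscr{F}$ (so $F_1,F_2\neq\varnothing$); when $\varnothing\in\mathscr{F}$ the zero of the quotient absorbs every product whose last coordinate degenerates to $\varnothing$, but since the product in question must equal the non-zero element $(i_1,j_1,F_1)$, this case does not interfere and the computation may be carried out on representatives.

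For necessity, suppose $(i_1,j_1,F_1)=(i_2,j_2,F_2)\cdot(m,m,G)$. Formula~\eqref{eq-2.1} splits according to the sign of $j_2-m$: if $j_2=m$ the product is $(i_2,j_2,F_2\cap G)$; if $j_2>m$ it is $(i_2,j_2,F_2\cap(m-j_2+G))$; and if $j_2<m$ it is $(i_2-j_2+m,\,m,\,(j_2-m+F_2)\cap G)$. Reading off the first two coordinates in each case shows $i_1-i_2=j_1-j_2=:k$, with $k=0$ in the first two cases and $k=m-j_2>0$ in the third, so in every case $k\in\omega$. Comparing the last coordinate then yields $F_1=F_2\cap G\subseteq F_2=-k+F_2$ when $k=0$, and $F_1=(j_2-m+F_2)\cap G=(-k+F_2)\cap G\subseteq -k+F_2$ when $k>0$; either way $F_1\subseteq -k+F_2$, as required.

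For sufficiency, assume $i_1-i_2=j_1-j_2=k$ for some $k\in\omega$ and $F_1\subseteq -k+F_2$. The natural witnessing idempotent is $s^{-1}s=(j_1,j_1,F_1)$, whose form follows from Lemma~\ref{lemma-3.1}\eqref{lemma-3.1(4)}, and it suffices to verify the single identity $(i_2,j_2,F_2)\cdot(j_1,j_1,F_1)=(i_1,j_1,F_1)$. Since $j_1=j_2+k\geqslant j_2$, formula~\eqref{eq-2.1} applies in the case $j_2=j_1$ when $k=0$ and in the case $j_2<j_1$ when $k>0$. In the former the product is $(i_2,j_2,F_2\cap F_1)=(i_1,j_1,F_1)$ because $F_1\subseteq F_2$; in the latter it is $(i_2+k,\,j_2+k,\,(-k+F_2)\cap F_1)=(i_1,j_1,F_1)$ because $F_1\subseteq -k+F_2$. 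As $G=F_1\in\mathscr{F}$, the chosen idempotent genuinely belongs to $\boldsymbol{B}_{\mathbb{Z}}^{\mathscr{F}}$, so $(i_1,j_1,F_1)\preccurlyeq(i_2,j_2,F_2)$.

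The individual computations are routine once the cases are laid out; the points that need care are (i) checking that the intersection-and-shift in the third coordinate of~\eqref{eq-2.1} collapses to exactly $F_1$ under the hypothesis $F_1\subseteq -k+F_2$, and (ii) ensuring in the necessity direction that \emph{every} admissible idempotent $(m,m,G)$, not merely the canonical $s^{-1}s$, forces the same relation $i_1-i_2=j_1-j_2=k\geqslant 0$. The latter holds because right multiplication by an idempotent can only shift the pair $(i_2,j_2)$ upward by a common non-negative amount (or leave it fixed) while shrinking the last coordinate. I expect this uniform bookkeeping across the three multiplication cases to be the main, albeit elementary, obstacle.
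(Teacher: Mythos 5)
Your proof is correct, and it takes a genuinely, if mildly, different route from the paper's. The paper does not argue from the raw definition: it invokes Lemma~1.4.6 of Lawson's book to replace $(i_1,j_1,F_1)\preccurlyeq(i_2,j_2,F_2)$ by the two identities $s=ss^{-1}t$ and $s=ts^{-1}s$ with the \emph{canonical} idempotents $ss^{-1}=(i_1,i_1,F_1)$ and $s^{-1}s=(j_1,j_1,F_1)$, computes both triple products via \eqref{eq-2.1}, and extracts $i_1-i_2=j_1-j_2=k\in\omega$ and $F_1\subseteq -k+F_2$ by cross-combining the resulting six cases (its $(1_1)$--$(3_1)$ against $(1_2)$--$(3_2)$). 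You instead read the definition $s\preccurlyeq t\Longleftrightarrow s=te$ with $e\in E(\boldsymbol{B}_{\mathbb{Z}}^{\mathscr{F}})$ literally: a single three-case computation of $(i_2,j_2,F_2)\cdot(m,m,G)$ for an \emph{arbitrary} idempotent already pins $m$ against $j_2$ through the second coordinate of the product and forces the stated relation in every case, so your necessity direction uses one product instead of two and no appeal to Lawson's lemma; your analysis in effect shows that one of the paper's two computations is redundant. The sufficiency halves are mirror images: the paper verifies $ss^{-1}t=s$, you verify $t\cdot s^{-1}s=s$ with $s^{-1}s=(j_1,j_1,F_1)$, both collapsing the shifted intersection to $F_1$ via $F_1\subseteq -k+F_2$, and you correctly note that the witnessing idempotent lies in the semigroup because $F_1\in\mathscr{F}$. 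One small merit of your write-up that the paper passes over in silence: you justify carrying out the computation on representatives in $\boldsymbol{B}_{\mathbb{Z}}\times\mathscr{F}$ when $\varnothing\in\mathscr{F}$ --- the zero cannot witness $s=te$ for nonzero $s$, and nonzero classes of the Rees quotient are singletons --- which is exactly the point needed to make the case analysis legitimate in the quotient.
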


\begin{proof}
$(\Longleftarrow)$ Якщо $F_1\subseteq -k+F_2$ та $i_1-i_2=j_1-j_2=k$ для деякого $k\in\omega$, то
\begin{align*}
  (i_1,j_1,F_1)&\cdot(i_1,j_1,F_1)^{-1}\cdot(i_2,j_2,F_2)=(i_1,j_1,F_1)\cdot(j_1,i_1,F_1)\cdot(i_2,j_2,F_2)=
   (i_1,i_1,F_1)\cdot(i_2,j_2,F_2)=\\
   &=(i_1,i_1-i_2+j_2,F_1\cap (i_2-i_1+F_2))
   =(i_1,j_1-j_2+j_2,F_1\cap (-k+F_2))=\\
   &   =(i_1,j_1,F_1\cap (-k+F_2))=(i_1,j_1,F_1),
\end{align*}
і за лемою~1.4.6 \cite{Lawson-1998} отримуємо, що $(i_1,j_1,F_1)\preccurlyeq(i_2,j_2,F_2)$ в $\boldsymbol{B}_{\mathbb{Z}}^{\mathscr{F}}$.

\smallskip
$(\Longrightarrow)$ Припустимо, що $(i_1,j_1,F_1)\preccurlyeq(i_2,j_2,F_2)$ в $\boldsymbol{B}_{\mathbb{Z}}^{\mathscr{F}}$. Згідно з лемою~1.4.6 \cite{Lawson-1998},
\begin{align*}
  (i_1,j_1,F_1)&=(i_1,j_1,F_1)\cdot(i_1,j_1,F_1)^{-1}\cdot(i_2,j_2,F_2)
   =(i_1,j_1,F_1)\cdot(j_1,i_1,F_1)\cdot(i_2,j_2,F_2)= \\
   &=(i_1,i_1,F_1)\cdot(i_2,j_2,F_2)=\\
   &=\left\{
    \begin{array}{ll}
      (i_1-i_1+i_2,j_2,(i_1-i_2+F_1)\cap F_2), & \hbox{якщо~} i_1<i_2;\\
      (i_1,j_2,F_1\cap F_2),                   & \hbox{якщо~} i_1=i_2;\\
      (i_1,i_1-i_2+j_2,F_1\cap (i_2-i_1+F_2)), & \hbox{якщо~} i_1>i_2
    \end{array}
  \right.  =\\
   &=\left\{
    \begin{array}{lll}
      (i_2,j_2,(i_1-i_2+F_1)\cap F_2),        & \hbox{якщо~} i_1<i_2;        &(1_1)\\
      (i_1,j_2,F_1\cap F_2),                  & \hbox{якщо~} i_1=i_2;        &(2_1)\\
      (i_1,i_1-i_2+j_2,F_1\cap(i_2-i_1+F_2)), & \hbox{якщо~} i_1>i_2         &(3_1)
    \end{array}
  \right.
\end{align*}
 
\begin{align*}
  (i_1,j_1,F_1)&=(i_2,j_2,F_2)\cdot(i_1,j_1,F_1)^{-1}\cdot(i_1,j_1,F_1)
   =(i_2,j_2,F_2)\cdot(j_1,i_1,F_1)\cdot(i_1,j_1,F_1)= \\
   &=(i_2,j_2,F_2)\cdot(j_1,j_1,F_1)=\\
   &=\left\{
    \begin{array}{ll}
      (i_2,j_2-j_1+j_1,F_2\cap (j_1-j_2+F_1)), & \hbox{якщо~} j_1<j_2;\\
      (i_2,j_1,F_2\cap F_1),                   & \hbox{якщо~} j_1=j_2;\\
      (i_2-j_2+j_1,j_1,(j_2-j_1+F_2)\cap F_1), & \hbox{якщо~} j_1>j_2
    \end{array}
  \right.  =\\
   &=\left\{
    \begin{array}{lll}
      (i_2,j_2,F_2\cap (j_1-j_2+F_1)),         & \hbox{якщо~} j_1<j_2;        &(1_2)\\
      (i_2,j_1,F_2\cap F_1),                   & \hbox{якщо~} j_1=j_2;        &(2_2)\\
      (i_2-j_2+j_1,j_1,(j_2-j_1+F_2)\cap F_1), & \hbox{якщо~} j_1>j_2.        &(3_2)
    \end{array}
  \right.
\end{align*}

У випадках $(1_1)$ i $(1_2)$ та $(2_1)$ i $(2_2)$ отримуємо, що $i_1=i_2$, $j_1=j_2$, а отже $i_1-i_2=j_1-j_2=0$ i $F_1\subseteq -0+F_2$. У випадках $(3_1)$ i $(3_2)$ отримуємо, що $i_1=i_2-j_2+j_1$, і врахувавши, що $i_1>i_2$, $j_1>j_2$ і $(j_2-j_1+F_2)\cap F_1=F_1$, та прийнявши $k=i_1-i_2=j_1-j_2$, отримуємо, що $F_1\subseteq -k+F_2$ для деякого $k\in\omega$.
\end{proof}


\begin{corollary}\label{corollary-3.4}
$(i,i,F_1)\preccurlyeq(j,j,F_2)$ в $E(\boldsymbol{B}_{\mathbb{Z}}^{\mathscr{F}})$ тоді і тільки тоді, коли $i\geqslant j$ i $F_1\subseteq j-i+F_2$.
\end{corollary}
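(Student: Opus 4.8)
The plan is to derive this directly from Proposition~\ref{proposition-3.3}, since a statement about the natural partial order restricted to idempotents should follow immediately from the already-established description of $\preccurlyeq$ on the whole semigroup.

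First I would recall two facts from the earlier material. By Lemma~\ref{lemma-3.1}\eqref{lemma-3.1(2)}, an element $(i,j,F)$ of $\boldsymbol{B}_{\mathbb{Z}}^{\mathscr{F}}$ is an idempotent precisely when $i=j$, so the two idempotents appearing in the statement are genuinely of the form $(i,i,F_1)$ and $(j,j,F_2)$. Moreover, as noted in Section~\ref{section-1}, the restriction of the natural partial order of an inverse semigroup to its set of idempotents coincides with the natural partial order on $E(\boldsymbol{B}_{\mathbb{Z}}^{\mathscr{F}})$; hence it suffices to evaluate $\preccurlyeq$ on these two idempotents by means of Proposition~\ref{proposition-3.3}.

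Next I would apply Proposition~\ref{proposition-3.3} with $(i_1,j_1,F_1)=(i,i,F_1)$ and $(i_2,j_2,F_2)=(j,j,F_2)$. The proposition yields $(i,i,F_1)\preccurlyeq(j,j,F_2)$ if and only if there exists $k\in\omega$ with $i_1-i_2=j_1-j_2=k$ and $F_1\subseteq -k+F_2$. Here both coordinate differences give $i-j=k$, so they are automatically consistent and force $k=i-j$. Since $k\in\omega$ means exactly $k\geqslant 0$, the requirement $i-j=k$ with $k\in\omega$ is equivalent to $i\geqslant j$; and substituting $k=i-j$ into the inclusion $F_1\subseteq -k+F_2$ gives $F_1\subseteq j-i+F_2$, which is precisely the asserted criterion.

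There is essentially no genuine obstacle: the argument is a specialization of Proposition~\ref{proposition-3.3}. The only points demanding a little care are the index bookkeeping---verifying that the first- and second-coordinate conditions collapse to one and the same value of $k$---and the elementary translations $k\in\omega\iff i\geqslant j$ and $-k+F_2=j-i+F_2$. These are routine, so the proof reduces to citing Proposition~\ref{proposition-3.3} and performing these substitutions.
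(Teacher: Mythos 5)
Your proposal is correct and matches the paper's intent exactly: the corollary is stated without a separate proof precisely because it is the specialization of Proposition~\ref{proposition-3.3} to idempotents $(i,i,F_1)$ and $(j,j,F_2)$, where both coordinate conditions collapse to $k=i-j\in\omega$ (i.e.\ $i\geqslant j$) and the inclusion becomes $F_1\subseteq j-i+F_2$. Your additional care in citing Lemma~\ref{lemma-3.1}\eqref{lemma-3.1(2)} and the remark that $\preccurlyeq$ restricted to $E(S)$ agrees with the natural order on idempotents is exactly the justification the paper leaves implicit.
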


Наступна теорема описує відношення Ґріна на напівгрупі $\boldsymbol{B}_{\omega}^{\mathscr{F}}$.

\begin{theorem}\label{theorem-3.5}
Нехай $\mathscr{F}$ --- ${\omega}$-замкнена підсім'я в  $\mathscr{P}(\omega)$ i $(i_1,j_1,F_1),(i_2,j_2,F_2)\in \boldsymbol{B}_{\mathbb{Z}}^{\mathscr{F}}$. Тоді:
\begin{itemize}
  \item[$\boldsymbol{(i)}$] $(i_1,j_1,F_1)\mathscr{R}(i_2,j_2,F_2)$ тоді і лише тоді, коли $i_1=i_2$ i $F_1=F_2$;
  \item[$\boldsymbol{(ii)}$] $(i_1,j_1,F_1)\mathscr{L}(i_2,j_2,F_2)$ тоді і лише тоді, коли $j_1=j_2$ i $F_1=F_2$;
  \item[$\boldsymbol{(iii)}$] $(i_1,j_1,F_1)\mathscr{H}(i_2,j_2,F_2)$ тоді і лише тоді, коли $i_1=i_2$, $j_1=j_2$ i $F_1=F_2$, а отже всі $\mathscr{H}$-класи напівгрупи $\boldsymbol{B}_{\omega}^{\mathscr{F}}$ є одноелементними;
  \item[$\boldsymbol{(iv)}$] $(i_1,j_1,F_1)\mathscr{D}(i_2,j_2,F_2)$ тоді і лише тоді, коли  $F_1=F_2$;
  \item[$\boldsymbol{(v)}$] $(i_1,j_1,F_1)\mathscr{J}(i_2,j_2,F_2)$ тоді і лише тоді, коли  існують $k_1,k_2\in\omega$ такі, що $F_1\subseteq -k_1+F_2$ і $F_2\subseteq -k_2+F_1$.
\end{itemize}
\end{theorem}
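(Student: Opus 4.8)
The plan is to reduce everything to the two idempotents naturally attached to an element and to the standard structure theory of inverse semigroups, since by Theorem~\ref{theorem-3.2} the semigroup $\boldsymbol{B}_{\mathbb{Z}}^{\mathscr{F}}$ is inverse. First I would record, using $(i,j,F)^{-1}=(j,i,F)$ from Lemma~\ref{lemma-3.1}\eqref{lemma-3.1(4)} together with the multiplication rule~\eqref{eq-2.1} (its middle line, since the inner indices coincide), the two basic products
\[
  (i_1,j_1,F_1)(i_1,j_1,F_1)^{-1}=(i_1,i_1,F_1), \qquad (i_1,j_1,F_1)^{-1}(i_1,j_1,F_1)=(j_1,j_1,F_1).
\]
Then parts $\boldsymbol{(i)}$--$\boldsymbol{(iii)}$ are immediate from the descriptions of Green's relations in an inverse semigroup: $a\,\mathscr{R}\,b\iff aa^{-1}=bb^{-1}$, $a\,\mathscr{L}\,b\iff a^{-1}a=b^{-1}b$, and $\mathscr{H}=\mathscr{R}\cap\mathscr{L}$. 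Comparing the idempotents above, $\mathscr{R}$ forces $i_1=i_2$ and $F_1=F_2$, $\mathscr{L}$ forces $j_1=j_2$ and $F_1=F_2$, and their intersection forces equality of the triples, so every $\mathscr{H}$-class is a singleton (which also re-proves combinatoriality).

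For $\boldsymbol{(iv)}$ I would use $\mathscr{D}=\mathscr{R}\circ\mathscr{L}$. If $(i_1,j_1,F_1)\,\mathscr{D}\,(i_2,j_2,F_2)$ there is a middle element $c$ with $(i_1,j_1,F_1)\,\mathscr{R}\,c\,\mathscr{L}\,(i_2,j_2,F_2)$, whence the third coordinate of $c$ equals both $F_1$ and $F_2$ by $\boldsymbol{(i)}$ and $\boldsymbol{(ii)}$, giving $F_1=F_2$. Conversely, if $F_1=F_2=F$, the explicit witness $c=(i_1,j_2,F)$ satisfies $(i_1,j_1,F)\,\mathscr{R}\,c$ (same first coordinate, same set) and $c\,\mathscr{L}\,(i_2,j_2,F)$ (same second coordinate, same set), so the two elements are $\mathscr{D}$-related.

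Part $\boldsymbol{(v)}$ is the heart of the matter. I would write $\mathscr{J}$ as the symmetric part of the quasi-order $\leqslant_{\mathscr{J}}$ given by $a\leqslant_{\mathscr{J}}b\iff a\in S^1bS^1$ (with $S=\boldsymbol{B}_{\mathbb{Z}}^{\mathscr{F}}$), and prove the clean statement: $(i_1,j_1,F_1)\leqslant_{\mathscr{J}}(i_2,j_2,F_2)$ if and only if $F_1\subseteq -k+F_2$ for some $k\in\omega$; the theorem then follows by applying this in both directions. For necessity I would track the effect of~\eqref{eq-2.1} on the third coordinate: a single left factor $(a,b,G)$ sends the third coordinate of $(i_2,j_2,F_2)$ into $-k+F_2$ with $k\in\omega$ — namely $k=0$ in the first two cases of~\eqref{eq-2.1} and $k=b-i_2>0$ in the third — and a subsequent right factor only shrinks the set further and can only increase $k$ within $\omega$; hence every element of $S^1(i_2,j_2,F_2)S^1$ has third coordinate inside some $-k+F_2$, $k\in\omega$. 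For sufficiency, assuming $F_1\subseteq -k+F_2$ I would exhibit the sandwich
\[
  (i_1,i_2+k,F_1)\cdot(i_2,j_2,F_2)\cdot(k+j_2,j_1,F_1)=(i_1,j_1,F_1),
\]
which one checks from~\eqref{eq-2.1} using $F_1\cap(-k+F_2)=F_1$. Symmetrising the inequality yields exactly the stated condition with $k_1,k_2\in\omega$.

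I expect the only real obstacle to be part $\boldsymbol{(v)}$: both the bookkeeping that no product can move the third coordinate into a \emph{right} translate $n+F_2$ with $n>0$ (so that the parameter genuinely lies in $\omega$, and it is the $\omega$-closedness of $\mathscr{F}$ that makes all intermediate triples legitimate members of $\boldsymbol{B}_{\mathbb{Z}}^{\mathscr{F}}$), and the guessing of the explicit sandwiching factors. An alternative, more conceptual route for $\boldsymbol{(v)}$ would bypass the direct computation: since $a\,\mathscr{J}\,aa^{-1}$, one has $a\,\mathscr{J}\,b\iff aa^{-1}\,\mathscr{J}\,bb^{-1}$, and for idempotents $e\leqslant_{\mathscr{J}}f$ holds iff some idempotent $g\leqslant f$ is $\mathscr{D}$-related to $e$; feeding in Corollary~\ref{corollary-3.4} and part $\boldsymbol{(iv)}$ reproduces the two inclusions $F_1\subseteq -k_1+F_2$ and $F_2\subseteq -k_2+F_1$ at once.
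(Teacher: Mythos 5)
Your proof is correct, and on the central part $\boldsymbol{(v)}$ it takes a genuinely different route from the paper; on $\boldsymbol{(i)}$--$\boldsymbol{(iii)}$ it coincides in substance with the paper (which invokes Theorem~1.17 of Clifford--Preston to reduce $\mathscr{R}$ to equality of the idempotents $aa^{-1}$ — exactly your criterion), and on $\boldsymbol{(iv)}$ it differs only mildly: for necessity the paper uses Munn's lemma (Lawson, Prop.~3.2.5) to produce $(i,j,F)$ with $(i,j,F)(i,j,F)^{-1}=(i_1,i_1,F_1)$ and $(i,j,F)^{-1}(i,j,F)=(j_2,j_2,F_2)$, whereas you read $F_1=F_2$ off the middle term of $\mathscr{D}=\mathscr{R}\circ\mathscr{L}$, which is a touch more elementary; both proofs then use the same witness $(i_1,j_2,F)$. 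For $\boldsymbol{(v)}$ the paper first reduces to the idempotents $(0,0,F)$ via $\mathscr{D}\subseteq\mathscr{J}$ and part $\boldsymbol{(iv)}$, and then quotes Lawson's Proposition~3.2.8 ($a\mathscr{J}b$ iff $a\mathscr{D}b'\preccurlyeq b$ and $b\mathscr{D}a'\preccurlyeq a$) together with Proposition~\ref{proposition-3.3}; your ``alternative, more conceptual route'' at the end is in fact precisely the paper's proof. Your main route instead computes the $\mathscr{J}$-preorder directly, and it checks out: the case analysis of \eqref{eq-2.1} does show that every element of $S^1(i_2,j_2,F_2)S^1$ has third coordinate contained in $-k+F_2$ with $k\in\omega$ (your three cases for a left factor, the monotonicity of $k$ under a further right factor, and the trivial cases where a factor is the adjoined identity all verify), and your sandwich $(i_1,i_2+k,F_1)\cdot(i_2,j_2,F_2)\cdot(k+j_2,j_1,F_1)=(i_1,j_1,F_1)$ is correct in both subcases: for $k>0$ the first product falls under the third line of \eqref{eq-2.1} and gives $(i_1,j_2+k,F_1\cap(-k+F_2))=(i_1,j_2+k,F_1)$, and for $k=0$ the middle line gives the same triple, after which the inner indices agree and the middle line finishes. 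What your route buys is self-containedness (no appeal to Lawson~3.2.8 or to Proposition~\ref{proposition-3.3}) plus a strictly stronger byproduct, namely the description of the one-sided relation $\leqslant_{\mathscr{J}}$ by the single inclusion $F_1\subseteq -k+F_2$; what the paper's route buys is brevity once the order-theoretic machinery is in place. Two cosmetic remarks: when $\varnothing\in\mathscr{F}$ the triples with third coordinate $\varnothing$ collapse to the zero of the Rees quotient, but your computations pass to the quotient unchanged and your criterion correctly makes the zero $\mathscr{J}$-related only to itself; and the $\omega$-closedness of $\mathscr{F}$ is what makes the multiplication well defined in the first place (Proposition~\ref{proposition-2.1}) — your sandwich factors themselves only require $F_1\in\mathscr{F}$, so your parenthetical attribution of their legitimacy to $\omega$-closedness is slightly misplaced, though harmless.
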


\begin{proof}
$\boldsymbol{(i)}$  Нехай $(i_1,j_1,F_1)$ i $(i_2,j_2,F_2)$~--- $\mathscr{R}$-еквівалентні елементи напівгрупи $\boldsymbol{B}_{\omega}^{\mathscr{F}}$. Оскільки згідно з теоремою \ref{theorem-3.2}, $\boldsymbol{B}_{\mathbb{Z}}^{\mathscr{F}}$~--- інверсна напівгрупа та $(i_1,j_1,F_1)\boldsymbol{B}_{\mathbb{Z}}^{\mathscr{F}}=$ \linebreak $(i_2,j_2,F_2)\boldsymbol{B}_{\mathbb{Z}}^{\mathscr{F}}$, то з теореми~1.17 в \cite{Clifford-Preston-1961} ви\-пли\-ває, що
\begin{equation*}
  (i_1,j_1,F_1)\boldsymbol{B}_{\mathbb{Z}}^{\mathscr{F}}=(i_1,j_1,F_1)(i_1,j_1,F_1)^{-1}\boldsymbol{B}_{\mathbb{Z}}^{\mathscr{F}}= (i_1,i_1,F_1)\boldsymbol{B}_{\mathbb{Z}}^{\mathscr{F}},
\end{equation*}
\begin{equation*}
  (i_2,j_2,F_2)\boldsymbol{B}_{\mathbb{Z}}^{\mathscr{F}}=(i_2,j_2,F_2)(i_2,j_2,F_2)^{-1}\boldsymbol{B}_{\mathbb{Z}}^{\mathscr{F}}= (i_2,i_2,F_2)\boldsymbol{B}_{\mathbb{Z}}^{\mathscr{F}},
\end{equation*}
звідки $(i_1,i_1,F_1)=(i_2,i_2,F_2)$. Таким чином, виконуються рівності $i_1=i_2$ i $F_1=F_2$.

Навпаки, нехай $(i_1,j_1,F_1)$ i $(i_2,j_2,F_2)$~--- елементи напівгрупи $\boldsymbol{B}_{\mathbb{Z}}^{\mathscr{F}}$ такі, що $i_1=i_2$ i $F_1=F_2$. Тоді $(i_1,i_1,F_1)=(i_2,i_2,F_2)$. Оскільки за теоремою \ref{theorem-3.2}, $\boldsymbol{B}_{\mathbb{Z}}^{\mathscr{F}}$~--- інверсна напівгрупа, то з теореми~1.17 в \cite{Clifford-Preston-1961} випливає, що
\begin{equation*}
  (i_1,j_1,F_1)\boldsymbol{B}_{\mathbb{Z}}^{\mathscr{F}}{=}(i_1,j_1,F_1)(i_1,j_1,F_1)^{-1}\boldsymbol{B}_{\mathbb{Z}}^{\mathscr{F}}{=} (i_1,i_1,F_1)\boldsymbol{B}_{\mathbb{Z}}^{\mathscr{F}}{=}(i_2,j_2,F_2)(i_2,j_2,F_2)^{-1}\boldsymbol{B}_{\mathbb{Z}}^{\mathscr{F}}{=} (i_2,i_2,F_2)\boldsymbol{B}_{\mathbb{Z}}^{\mathscr{F}},
\end{equation*}
а отже, $(i_1,j_1,F_1)\mathscr{R}(i_2,j_2,F_2)$ в напівгрупі $\boldsymbol{B}_{\mathbb{Z}}^{\mathscr{F}}$.

\smallskip
Доведення твердження $\boldsymbol{(ii)}$ аналогічне доведенню твердження $\boldsymbol{(i)}$.

\smallskip
Твердження  $\boldsymbol{(iii)}$ випливає з $\boldsymbol{(i)}$ та $\boldsymbol{(ii)}$.

\smallskip
$\boldsymbol{(iv)}$ Нехай $(i_1,j_1,F_1)$ i $(i_2,j_2,F_2)$~--- елементи напівгрупи $\boldsymbol{B}_{\mathbb{Z}}^{\mathscr{F}}$ такі, що $(i_1,j_1,F_1)\mathscr{D}(i_2,j_2,F_2)$. Оскільки $\mathscr{R},\mathscr{L}\subseteq\mathscr{D}$  та $(i_1,i_1,F_1)\mathscr{R}(i_1,j_1,F_1)$ i $(i_2,j_2,F_2)\mathscr{L}(j_2,j_2,F_2)$ за твердженнями $\boldsymbol{(i)}$ і $\boldsymbol{(ii)}$, то $(i_1,i_1,F_1)\mathscr{D}(j_2,j_2,F_2)$. За лемою Манна (див. \cite[лема 1.1]{Munn-1966}) або за твердженням 3.2.5 в \cite{Lawson-1998} існує елемент $(i,j,F)$ напівгрупи $\boldsymbol{B}_{\mathbb{Z}}^{\mathscr{F}}$ такий, що
$(i,j,F)\cdot(i,j,F)^{-1}=(i_1,i_1,F_1)$ i $(i,j,F)^{-1}\cdot(i,j,F)=(j_2,j_2,F_2)$.
З твердження \eqref{lemma-3.1(4)} леми \ref{lemma-3.1} випливає, що
$(i,j,F)\cdot(i,j,F)^{-1}=(i,j,F)\cdot(j,i,F)=(i,i,F)$ та $(i,j,F)^{-1}\cdot(i,j,F)=(j,i,F)\cdot(i,j,F)=(j,j,F)$,
а отже, $F=F_1=F_2$.

Нехай $i_1,i_2,j_1,j_2$~--- довільні цілі числа та $F\in\mathscr{F}$. За твердженнями $\boldsymbol{(i)}$ і $\boldsymbol{(ii)}$ маємо, що $(i_1,i_1,F)\mathscr{R}(i_1,j_1,F)$ i $(i_2,j_2,F)\mathscr{L}(j_2,j_2,F)$, а з твердження \eqref{lemma-3.1(2)} леми \ref{lemma-3.1} випливає, що $(i_1,i_1,F),(j_2,j_2,F)\in E(\boldsymbol{B}_{\mathbb{Z}}^{\mathscr{F}})$. Оскільки
\begin{equation*}
(i_1,j_2,F)\cdot(i_1,j_2,F)^{-1}=(i_1,j_2,F)\cdot(j_2,i_1,F)=(i_1,i_1,F)
\end{equation*}
\begin{equation*}
(i_1,j_2,F)^{-1}\cdot(i_1,j_2,F)=(j_2,i_1,F)\cdot(i_1,j_2,F)=(j_2,j_2,F),
\end{equation*}
то з леми Манна випливає, що $(i_1,i_1,F)\mathscr{D}(j_2,j_2,F)$ в $\boldsymbol{B}_{\mathbb{Z}}^{\mathscr{F}}$, і внаслідок $\mathscr{R}\circ\mathscr{D}\circ\mathscr{L}\subseteq \mathscr{D}$, виконується  $(i_1,j_1,F)\mathscr{D}(i_2,j_2,F)$.

\smallskip
$\boldsymbol{(v)}$ Зауважимо, що, оскільки $\mathscr{D}\subseteq \mathscr{J}$ і за твердженням $\boldsymbol{(iv)}$ маємо, що  $(0,0,F)\mathscr{D}(i,j,F)$ у напівгрупі $\boldsymbol{B}_{\mathbb{Z}}^{\mathscr{F}}$ для довільних $i,j\in\mathbb{Z}$ i $F\in\mathscr{F}$, то достатньо знайти необхідну та достатню умову, коли елементи $(0,0,F_1)$ i $(0,0,F_2)$ є $\mathscr{J}$-еквівалентними у напівгрупі $\boldsymbol{B}_{\mathbb{Z}}^{\mathscr{F}}$.

За твердженням~3.2.8 \cite{Lawson-1998} елементи $a$ i $b$ інверсної напівгрупи $S$ є $\mathscr{J}$-еквівалентні тоді і лише тоді, коли $a\mathscr{D}b'\preccurlyeq b$ i $b\mathscr{D}a'\preccurlyeq a$ для деяких $a',b'\in S$. За твердженням \ref{proposition-3.3} ідемпотенти $(0,0,F_1)$ i $(0,0,F_2)$ є $\mathscr{J}$-еквівалентними в напівгрупі $\boldsymbol{B}_{\mathbb{Z}}^{\mathscr{F}}$ тоді і лише тоді, коли існують  невід'ємні цілі числа $k_1$ i $k_2$ такі, що $F_1\subseteq -k_1+F_2$ і $F_2\subseteq -k_2+F_1$. З сказаного вище випливає, що $(i_1,j_1,F_1)\mathscr{J}(i_2,j_2,F_2)$ у напівгрупі $\boldsymbol{B}_{\mathbb{Z}}^{\mathscr{F}}$ тоді і лише тоді, коли існують $k_1,k_2\in\omega$ такі, що $F_1\subseteq -k_1+F_2$ і $F_2\subseteq -k_2+F_1$.
\end{proof}

Нагадаємо \cite{Lawson-1998, Ash-1979}, шо інверсна напівгрупа $S$ називається \emph{комбінаторною}, якщо відношення Ґріна $\mathscr{H}$ на $S$ є відношенням рівності.
З твердження $\boldsymbol{(iii)}$ теореми~\ref{theorem-3.5} випливає

\begin{corollary}\label{corollary-3.6}
Якщо $\mathscr{F}$ --- ${\omega}$-замкнена підсім'я в  $\mathscr{P}(\omega)$ то $\boldsymbol{B}_{\mathbb{Z}}^{\mathscr{F}}$ --- комбінаторна інверсна напівгрупа.
\end{corollary}

Також з твердження $\boldsymbol{(v)}$ теореми~\ref{theorem-3.5} випливає

\begin{corollary}\label{corollary-3.7}
Нехай $\mathscr{F}$ --- ${\omega}$-замкнена підсім'я в  $\mathscr{P}(\omega)$ і $\varnothing\notin \mathscr{F}$. Тоді напівгрупа $\boldsymbol{B}_{\mathbb{Z}}^{\mathscr{F}}$ є простою тоді і лише тоді, коли для довільних $F_1,F_2\in \mathscr{F}$ існують $k_1,k_2\in\omega$ такі, що $F_1\subseteq -k_1+F_2$ і $F_2\subseteq -k_2+F_1$.
\end{corollary}

Нагадаємо \cite{Clifford-Preston-1961}, шо напівгрупа $S$ з нулем $0$ називається \emph{$0$-простою}, якщо $S\cdot S\neq \{0\}$ i $\{0\}$ --- єдиний власний двобічний ідеал в $S$. Добре відомо (див. \cite[лема~2.28]{Clifford-Preston-1961}), що напівгрупа $S$ з нулем $0$ є $0$-простою тоді і лише тоді, коли $S$ має лише два $\mathscr{J}$-класи: $S\setminus\{0\}$ i $\{0\}$. З твердження $\boldsymbol{(v)}$ теореми~\ref{theorem-3.5} випливає

\begin{corollary}\label{corollary-3.8}
Нехай $\mathscr{F}$ --- ${\omega}$-замкнена підсім'я в  $\mathscr{P}(\omega)$ і $\varnothing\in \mathscr{F}$. Тоді напівгрупа $\boldsymbol{B}_{\omega}^{\mathscr{F}}$ є $0$-простою тоді і лише тоді, коли для довільних непорожніх множин $F_1,F_2\in \mathscr{F}$ існують $k_1,k_2\in\omega$ такі, що
$F_1\subseteq -k_1+F_2$ і $F_2\subseteq -k_2+F_1$.
\end{corollary}

Нагадаємо \cite{Saito-1965} (див. також \cite{Lawson-1998}), що інверсна напівгрупа $S$ називається \emph{$E$-унітарною}, якщо для $e\in E(S)$ i $s\in S$ з $e\preccurlyeq s$ випливає, що $s\in E(S)$. Тоді з твердження \ref{proposition-3.3} випливає

\begin{corollary}\label{corollary-3.9}
Якщо $\mathscr{F}$ --- ${\omega}$-замкнена підсім'я в  $\mathscr{P}(\omega)$ і $\varnothing\notin \mathscr{F}$, то  $\boldsymbol{B}_{\mathbb{Z}}^{\mathscr{F}}$~--- $E$-унітарна інверсна напівгрупа.
\end{corollary}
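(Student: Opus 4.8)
The plan is to obtain this statement directly from the description of the natural partial order in Proposition~\ref{proposition-3.3} together with the idempotency criterion of Lemma~\ref{lemma-3.1}\eqref{lemma-3.1(2)}. Recall that, by definition, $\boldsymbol{B}_{\mathbb{Z}}^{\mathscr{F}}$ is $E$-unitary precisely when every element lying above an idempotent in the natural partial order is itself an idempotent. Since $\varnothing\notin\mathscr{F}$, Definition~\ref{definition-2.2} gives $\boldsymbol{B}_{\mathbb{Z}}^{\mathscr{F}}=(\boldsymbol{B}_{\mathbb{Z}}\times\mathscr{F},\cdot)$, so every element of the semigroup is a genuine triple $(i,j,F)$ with $F\in\mathscr{F}$, and by Lemma~\ref{lemma-3.1}\eqref{lemma-3.1(1)} there is no zero to treat as a special case; this is exactly where the hypothesis $\varnothing\notin\mathscr{F}$ enters.

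First I would fix an idempotent $e=(i,i,F)\in E(\boldsymbol{B}_{\mathbb{Z}}^{\mathscr{F}})$, whose shape is forced by Lemma~\ref{lemma-3.1}\eqref{lemma-3.1(2)}, together with an arbitrary element $s=(a,b,G)\in\boldsymbol{B}_{\mathbb{Z}}^{\mathscr{F}}$ satisfying $e\preccurlyeq s$. Applying Proposition~\ref{proposition-3.3} to the relation $(i,i,F)\preccurlyeq(a,b,G)$ yields some $k\in\omega$ with $F\subseteq -k+G$ and, crucially, with the two coordinate differences equal:
\begin{equation*}
  i-a=i-b=k.
\end{equation*}
The second step is then simply to read off $i-a=i-b$, which forces $a=b$, so that $s=(a,a,G)$. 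By Lemma~\ref{lemma-3.1}\eqref{lemma-3.1(2)} the triple $(a,a,G)$ is an idempotent, hence $s\in E(\boldsymbol{B}_{\mathbb{Z}}^{\mathscr{F}})$, which is precisely the defining condition for $E$-unitarity.

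There is essentially no genuine obstacle in the calculation: the entire content is the observation that the partial-order description of Proposition~\ref{proposition-3.3} pins the first two coordinates of the larger element to coincide as soon as the smaller element is idempotent. The only point worth flagging is the necessity of the hypothesis $\varnothing\notin\mathscr{F}$, which I would expect to be the one place a careless argument could go wrong. If instead $\varnothing\in\mathscr{F}$, then by Lemma~\ref{lemma-3.1}\eqref{lemma-3.1(1)} the semigroup acquires a zero $\boldsymbol{0}$, an idempotent lying below \emph{every} element, so that $E$-unitarity would collapse $\boldsymbol{B}_{\mathbb{Z}}^{\mathscr{F}}$ to a semilattice and the statement would fail. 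Thus $\varnothing\notin\mathscr{F}$ is exactly what guarantees that Proposition~\ref{proposition-3.3} applies uniformly to all elements and that no degenerate bottom element can sit below a non-idempotent.
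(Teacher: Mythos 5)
Your proof is correct and follows exactly the route the paper takes: the paper's entire proof of this corollary is the remark that it follows from Proposition~\ref{proposition-3.3}, and your argument simply spells out that derivation (the equalities $i-a=i-b=k$ from the order description force $a=b$, whence $s$ is idempotent by Lemma~\ref{lemma-3.1}\eqref{lemma-3.1(2)}). Your closing aside about $\varnothing\in\mathscr{F}$ producing a zero below every element correctly identifies why the hypothesis is needed, modulo the degenerate case $\mathscr{F}=\{\varnothing\}$, where the semigroup is trivial and trivially $E$-unitary.
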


\begin{proposition}\label{proposition-3.10}
Якщо $\mathscr{F}$ --- ${\omega}$-замкнена підсім'я в  $\mathscr{P}(\omega)$, то  $\boldsymbol{B}_{\mathbb{Z}}^{\mathscr{F}}$ містить одиницю тоді і лише тоді, коли $\mathscr{F}=\{\varnothing\}$.
\end{proposition}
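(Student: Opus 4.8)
The plan is to prove both implications, the reverse being immediate and the forward one resting on the unboundedness of $\mathbb{Z}$ from below, which is exactly what distinguishes the \emph{extended} bicyclic semigroup from the ordinary one.

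First I would dispose of the direction $(\Leftarrow)$. If $\mathscr{F}=\{\varnothing\}$, then $\boldsymbol{B}_{\mathbb{Z}}\times\mathscr{F}=\{(i,j,\varnothing)\colon i,j\in\mathbb{Z}\}=\boldsymbol{I}$, so after factoring by $\boldsymbol{I}$ in the sense of Definition~\ref{definition-2.2} the quotient $\boldsymbol{B}_{\mathbb{Z}}^{\mathscr{F}}$ collapses to the single class $\boldsymbol{0}$. A one-element semigroup is trivially a monoid, hence it has a unit.

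For the converse I would argue by contraposition: assuming $\mathscr{F}\neq\{\varnothing\}$ I fix a nonempty $F_0\in\mathscr{F}$ and show that no $e\in\boldsymbol{B}_{\mathbb{Z}}^{\mathscr{F}}$ can be a two-sided identity; in fact none can even be a left identity. Any identity is idempotent, so by statement~\eqref{lemma-3.1(2)} of Lemma~\ref{lemma-3.1} it is either the zero $\boldsymbol{0}$ (which is present only when $\varnothing\in\mathscr{F}$) or a triple $(a,a,F_e)$ with $F_e\neq\varnothing$. The zero is excluded at once, since $\boldsymbol{0}\cdot(0,0,F_0)=\boldsymbol{0}\neq(0,0,F_0)$ because $F_0\neq\varnothing$. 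For a candidate $e=(a,a,F_e)$ I would test it against $x=(i,i,F_0)$ with any integer $i<a$; such an $x$ is a genuine nonzero element precisely because $F_0\neq\varnothing$. Since $a>i$, the third line of~\eqref{eq-2.1} yields
\begin{equation*}
  e\cdot x=(a,a,F_e)\cdot(i,i,F_0)=\bigl(a,\,a,\,F_e\cap(i-a+F_0)\bigr),
\end{equation*}
whose first coordinate equals $a\neq i$ (and which is $\boldsymbol{0}$ if the displayed intersection is empty); in either case $e\cdot x\neq x$, so $e$ is not a left identity, a contradiction.

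The computation is routine once the candidate for a unit is pinned down, so the only point requiring care is the reduction itself: recognizing that an identity must be idempotent and therefore, by Lemma~\ref{lemma-3.1}, has equal first two coordinates, together with the separate treatment of the zero arising in the quotient case. The conceptual heart — and the step I expect to matter most — is the observation that the first coordinate of $e\cdot(i,i,F_0)$ equals $\max\{a,i\}$, so that a left identity would force $a\leqslant i$ for \emph{every} $i\in\mathbb{Z}$; this is exactly where the two-sided index set $\mathbb{Z}$, being unbounded below, rules out a global unit.
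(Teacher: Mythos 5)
Your argument is correct and takes essentially the same route as the paper's proof: a unit must be an idempotent of the form $(a,a,F)$ by Lemma~\ref{lemma-3.1}\eqref{lemma-3.1(2)}, and multiplying it on the right by an element with strictly smaller index (you use $(i,i,F_0)$ with $i<a$, the paper uses $(i-1,i-1,F)$) keeps the first coordinate equal to $a$ by the third line of \eqref{eq-2.1}, so it cannot act as a left identity. Your explicit exclusion of $\boldsymbol{0}$ as a candidate and your handling of the degenerate quotient in the $(\Longleftarrow)$ direction merely spell out what the paper leaves implicit.
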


\begin{proof}
$(\Longrightarrow)$ Припустимо, що напівгрупа $\boldsymbol{B}_{\mathbb{Z}}^{\mathscr{F}}$ містить одиницю. Оскільки одиниця кожної напівгрупи є ідемпотентом, то, згідно з твердженням \eqref{lemma-3.1(2)} леми \ref{lemma-3.1} одиниця в $\boldsymbol{B}_{\mathbb{Z}}^{\mathscr{F}}$ має вигляд $(i,i,F)$ для деяких $i\in\mathbb{Z}$ i $F\in \mathscr{F}$. Якщо $\mathscr{F}=\{\varnothing\}$, то твердження очевидне, а тому надалі вважатимемо, що $F\neq\{\varnothing\}$. Тоді
\begin{equation*}
  (i,i,F)\cdot(i-1,i-1,F)=(i,i,F\cap(-1+F))\neq (i-1,i-1,F),
\end{equation*}
а отже елемент $(i,i,F)$ не є одиницею в $\boldsymbol{B}_{\mathbb{Z}}^{\mathscr{F}}$, протиріччя. З отриманої суперечності випливає, що $\mathscr{F}=\{\varnothing\}$.

Імплікація $(\Longleftarrow)$ очевидна.
\end{proof}

\begin{proposition}\label{proposition-3.11}
Напівгрупа $\boldsymbol{B}_{\mathbb{Z}}^{\mathscr{F}}$ ізоморфна розширеній біциклічній напівгрупі $\mathscr{C}_\omega$ тоді і тільки тоді, коли $\mathscr{F}=\{F\}$ i $F$~--- непорожня індуктивна підмножина в $\omega$.
\end{proposition}

\begin{proof}
$(\Longleftarrow)$ Нехай $F$~--- індуктивна підмножина в $\omega$ і $\mathscr{F}=\{F\}$. Для довільних $i,j,k,l\in\mathbb{Z}$ маємо, що
\begin{align*}
  (i,j,F){\cdot} (k,l,F)&{=}
\left\{
  \begin{array}{ll}
    (i-j+k,l,(j-k+F)\cap F), & \hbox{якщо~} j<k; \\
    (i,l,F\cap F),           & \hbox{якщо~} j=k; \\
    (i,j-k+l,F\cap (k-j+F)), & \hbox{якщо~} j>k
  \end{array}
\right.{=}
\left\{
  \begin{array}{ll}
    (i-j+k,l,F), & \hbox{якщо~} j<k; \\
    (i,l,F), & \hbox{якщо~} j=k; \\
    (i,j-k+l,F), & \hbox{якщо~} j>k,
  \end{array}
\right.
\end{align*}
і з означення напівгрупової операції на розширеній біциклічній напівгрупі $\mathscr{C}_\omega$ випливає, що ві\-доб\-раження $\mathfrak{f}\colon \boldsymbol{B}_{\mathbb{Z}}^{\mathscr{F}}\to \mathscr{C}_\omega$, означене за формулою $\mathfrak{f}(i,j,F)=(i,j)$, є ізоморфізмом.

\smallskip
$(\Longrightarrow)$ Припустимо, що напівгрупа $\boldsymbol{B}_{\mathbb{Z}}^{\mathscr{F}}$ ізоморфна розширеній біциклічній напівгрупі $\mathscr{C}_\omega$. Оскільки за теоремою 2.3 в~\cite{Warne-1968} розширена біциклічна напівгрупа $\mathscr{C}_\omega$ є біпростою, то з твердження $\boldsymbol{(iv)}$ теореми \ref{theorem-3.5} випливає, що  сім'я $\mathscr{F}$ складається з однієї множини $F$. З означення напівгрупової операції в $\boldsymbol{B}_{\mathbb{Z}}^{\mathscr{F}}$ випливає, що $F$ --- нескінченна підмножина в $\omega$. Справді, якщо $F$ --- скінченна підмножина в $\omega$, то
\begin{equation*}
  (0,0,F)\cdot(1,1,F)=(1,1,(-1+F)\cap F)\notin \boldsymbol{B}_{\mathbb{Z}}^{\mathscr{F}},
\end{equation*}
оскільки $(-1+F)\cap F\neq F$. У випадку, коли $F$ --- нескінченна неіндуктивна підмножина в $\omega$, то з леми 6  в \cite{Gutik-Mykhalenych-2020} випливає, що $(-1+F)\cap F\neq F$, а тому
\begin{equation*}
  (0,0,F)\cdot(1,1,F)=(1,1,(-1+F)\cap F)\notin \boldsymbol{B}_{\mathbb{Z}}^{\mathscr{F}}.
\end{equation*}
Отже, $F$~--- непорожня індуктивна підмножина в $\omega$.
\end{proof}

З твердження~\ref{proposition-3.11} випливає

\begin{corollary}\label{proposition-3.12}
Нехай $\mathscr{F}$ --- ${\omega}$-замкнена підсім'я в  $\mathscr{P}(\omega)$. Якщо сім'я $\mathscr{F}$ містить непорожню індуктивну підмножину в $\omega$, то напівгрупа $\boldsymbol{B}_{\mathbb{Z}}^{\mathscr{F}}$ містить ізоморфну копію розширеної біциклічної напівгрупи.
\end{corollary}

\begin{theorem}\label{theorem-3.13}
Нехай $\mathscr{F}$ --- ${\omega}$-замкнена підсім'я в  $\mathscr{P}(\omega)$. Тоді такі умови екві\-ва\-лентні:
\begin{itemize}
  \item[$\boldsymbol{(i)}$] $\boldsymbol{B}_{\mathbb{Z}}^{\mathscr{F}}$~--- біпроста напівгрупа;
  \item[$\boldsymbol{(ii)}$] $\mathscr{F}$ --- одноелементна сім'я;
  \item[$\boldsymbol{(iii)}$] напівгрупа $\boldsymbol{B}_{\mathbb{Z}}^{\mathscr{F}}$ або тривіальна, або ізоморфна розширеній біциклічній напівгрупі.
\end{itemize}
\end{theorem}

\begin{proof}
Еквівалентність умов $\boldsymbol{(i)}$ і $\boldsymbol{(ii)}$ випливає з твердження $\boldsymbol{(iv)}$ теореми \ref{theorem-3.5}.
Імплікація $\boldsymbol{(iii)}\Longrightarrow\boldsymbol{(ii)}$ випливає з твердження \ref{proposition-3.11}.

\smallskip
Доведемо імплікацію $\boldsymbol{(ii)}\Longrightarrow\boldsymbol{(iii)}$. Якщо $\mathscr{F}=\{\varnothing\}$, то з означення напівгрупи $\boldsymbol{B}_{\mathbb{Z}}^{\mathscr{F}}$ випливає, що $\boldsymbol{B}_{\mathbb{Z}}^{\mathscr{F}}$ --- тривіальна (одноелементна) напівгрупа. Тому припустимо, що $\mathscr{F}=\{F\}$ для деякої непорожньої множини $F$. Оскільки сім'я $\mathscr{F}$ --- ${\omega}$-замкнена, то
$(-1+F)\cap F=F$, а отже, за лемою 6 в \cite{Gutik-Mykhalenych-2020}, $F$~--- індуктивна підмножина в $\omega$. Далі скористаємося твердженням \ref{proposition-3.11}.
\end{proof}

Якщо $\lambda$~--- ненульовий кардинал, то множина $\boldsymbol{\mathcal{B}}_\lambda=(\lambda\times\lambda)\sqcup\{0\}$ з напівгруповою операцією
\begin{equation*}
  (a,b)\cdot(c,d)=
\left\{
  \begin{array}{cl}
    (a,d), & \hbox{якщо~} b=c; \\
    0,     & \hbox{якщо~} b\neq c,
  \end{array}
\right.
\qquad \hbox{i} \qquad (a,b)\cdot 0=0\cdot(a,b)=0\cdot 0=0,
\end{equation*}
де $a,b,c,d\in\lambda$, називається \emph{напівгрупою $\lambda{\times}\lambda$-матричних одиниць} \cite{Lawson-1998, Petrich-1984}.

\begin{proposition}\label{proposition-3.14}
Нехай $\mathscr{F}$ --- ${\omega}$-замкнена підсім'я в  $\mathscr{P}(\omega)$.
Напівгрупа $\boldsymbol{B}_{\mathbb{Z}}^{\mathscr{F}}$ ізо\-морфна напівгрупі $\omega{\times}\omega$-матричних одиниць $\boldsymbol{\mathcal{B}}_\omega$ тоді і тільки тоді, коли $\mathscr{F}=\{F,\varnothing\}$, де $F$~--- одноточкова підмножина в $\omega$.
\end{proposition}

\begin{proof}
$(\Longleftarrow)$
Припустимо, що $\mathscr{F}=\{F,\varnothing\}$ i $F$~--- одноточкова підмножина в $\omega$. Для довільних $i,j,k,l\in\mathbb{Z}$ маємо
\begin{equation*}
  (i,j,F)\cdot (k,l,F)=
\left\{
  \begin{array}{ll}
    (i-j+k,l,(j-k+F)\cap F), & \hbox{якщо~} j<k; \\
    (i,l,F\cap F),            & \hbox{якщо~} j=k\\
    (i,j-k+l,F\cap (k-j+F)), & \hbox{якщо~} j>k
  \end{array}
\right.=
\left\{
  \begin{array}{cl}
    (i,l,F),  & \hbox{якщо~} j=k \\
    0,        & \hbox{якщо~} j\neq k,
  \end{array}
\right.
\end{equation*}
і очевидно, що
$ 
(i,j,F)\cdot 0=0\cdot(i,j,F)=0\cdot 0=0,
$ 
звідки випливає, що відображення $\mathfrak{f}\colon \boldsymbol{B}_{\mathbb{Z}}^{\mathscr{F}}\to \boldsymbol{\mathcal{B}}_\omega$, означене за формулою  $\mathfrak{f}(i,j,F)=(i,j)$ є ізоморфізмом.

\smallskip
$(\Longrightarrow)$
Припустимо, що напівгрупа $\boldsymbol{B}_{\mathbb{Z}}^{\mathscr{F}}$ ізоморфна напівгрупі $\omega{\times}\omega$-матричних одиниць $\boldsymbol{\mathcal{B}}_\omega$. Зафіксуємо довільний ненульовий елемент $(i,j,F)$ напівгрупи $\boldsymbol{B}_{\mathbb{Z}}^{\mathscr{F}}$. Якщо $(i,j,F)$ не є ідемпотентом, то за твердженням \eqref{lemma-3.1(2)} леми~\ref{lemma-3.1} маємо, що $i\neq j$, а тоді
\begin{equation*}
  0=(i,j,F)\cdot(i,j,F)=
\left\{
  \begin{array}{ll}
    (i-j+i,j,(j-i+F)\cap F), & \hbox{якщо~} j<i; \\
    (i,j-i+j,F\cap (i-j+F)), & \hbox{якщо~} j>i.
  \end{array}
\right.
\end{equation*}
Отже, для довільних різних $i,j\in\mathbb{Z}$ маємо, що
$ 
(j-i+F)\cap F=\varnothing=F\cap (i-j+F),
$ 
а це означає, що $(-k+F)\cap F=\varnothing$ для довільного натурального числа $k$. Звідси випливає, що множина $F$ одноелементна.

Припустимо, що сім'я $\mathscr{F}$ містить дві одноелементні множини $F_k=\{k\}$ i $F_l=\{l\}$, для деяких різних $k,l\in \omega$. Не зменшуючи загальності можемо вважати, що $k<l$. Згідно з твердженням\eqref{lemma-3.1(2)} леми~\ref{lemma-3.1}, для довільного $i\in\mathbb{Z}$ елементи $(i+k,i+k,F_k)$ i $(i+l,i+l,F_l)$ є ідемпотентами в $\boldsymbol{B}_{\mathbb{Z}}^{\mathscr{F}}$, причому $(i+k,i+k,F_k)\neq(i+l,i+l,F_l)$, оскільки $k\neq l$. Оскільки напівгрупи $\boldsymbol{B}_{\mathbb{Z}}^{\mathscr{F}}$ і  $\boldsymbol{\mathcal{B}}_\omega$ є ізоморфними, то всі ненульові ідемпотенти в $\boldsymbol{B}_{\mathbb{Z}}^{\mathscr{F}}$ примітивні, а отже маємо, що
\begin{equation*}
  0=(i+k,i+k,F_k)\cdot(i+l,i+l,F_l)
   =(i+l,i+l,(k-l+F_l)\cap F_k)
   =(i+l,i+l,F_k)\neq 0,
\end{equation*}
суперечність. З отриманої суперечності випливає, що сім'я $\mathscr{F}$ містить лише одну одноелементну множину.
\end{proof}

Нехай $\boldsymbol{B}_{\omega}^{\mathscr{F}}$ --- напівгрупа, означена в \cite{Gutik-Mykhalenych-2020}. З теореми \ref{theorem-3.13} випливає, що для фіксованої сім'ї $\mathscr{F}$ напівгрупи $\boldsymbol{B}_{\mathbb{Z}}^{\mathscr{F}}$ і $\boldsymbol{B}_{\omega}^{\mathscr{F}}$ неізоморфні. Однак, з твердження 4 в \cite{Gutik-Mykhalenych-2020} і твердження \ref{proposition-3.14} випливає

\begin{corollary}\label{corollary-3.15}
Якщо $F$~--- одноточкова підмножина в $\omega$ і $\mathscr{F}=\{F,\varnothing\}$, то напівгрупи $\boldsymbol{B}_{\mathbb{Z}}^{\mathscr{F}}$ і $\boldsymbol{B}_{\omega}^{\mathscr{F}}$ ізоморфні.
\end{corollary}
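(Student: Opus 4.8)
The plan is to obtain the isomorphism by transitivity, using the countable $\omega\times\omega$-semigroup of matrix units $\boldsymbol{\mathcal{B}}_\omega$ as an intermediate model and combining Proposition~\ref{proposition-3.14} with its counterpart for the $\omega$-based construction. First I would observe that the hypotheses of the corollary --- namely $\mathscr{F}=\{F,\varnothing\}$ with $F$ a (nonempty) finite subset of $\omega$ --- are exactly those of Proposition~\ref{proposition-3.14}, which therefore supplies an isomorphism $\mathfrak{f}\colon\boldsymbol{B}_{\mathbb{Z}}^{\mathscr{F}}\to\boldsymbol{\mathcal{B}}_\omega$, given concretely by $(i,j,F)\mapsto(i,j)$.

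Next I would invoke the analogous statement for $\boldsymbol{B}_{\omega}^{\mathscr{F}}$ established in Proposition~4 of \cite{Gutik-Mykhalenych-2020}, which asserts that under the very same condition on $\mathscr{F}$ the semigroup $\boldsymbol{B}_{\omega}^{\mathscr{F}}$ is isomorphic to $\boldsymbol{\mathcal{B}}_\omega$; call this isomorphism $\mathfrak{g}\colon\boldsymbol{B}_{\omega}^{\mathscr{F}}\to\boldsymbol{\mathcal{B}}_\omega$. The decisive point to check is that the hypothesis in the cited reference coincides with ours, so that both isomorphisms land in the same model semigroup $\boldsymbol{\mathcal{B}}_\omega$. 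Once that is confirmed, the composite $\mathfrak{g}^{-1}\circ\mathfrak{f}\colon\boldsymbol{B}_{\mathbb{Z}}^{\mathscr{F}}\to\boldsymbol{B}_{\omega}^{\mathscr{F}}$ is an isomorphism, which is precisely the assertion.

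I expect essentially no computational difficulty here, since both halves of the chain are already available and the explicit maps used are transparent. The only genuine care needed is the bookkeeping of matching the hypotheses of the external reference with those of Proposition~\ref{proposition-3.14}, and of confirming that $F$ is understood to be nonempty --- the case $F=\varnothing$, giving $\mathscr{F}=\{\varnothing\}$, is degenerate and falls outside the matrix-units picture. That verification is the main, and minor, obstacle; an alternative route would be to write down the composite $(i,j,F)\mapsto(i,j)\mapsto$ (the corresponding triple in $\boldsymbol{B}_{\omega}^{\mathscr{F}}$) explicitly and check multiplicativity by hand, but routing through $\boldsymbol{\mathcal{B}}_\omega$ avoids any fresh calculation.
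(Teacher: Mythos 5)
Your proposal is correct and coincides with the paper's own derivation: the paper obtains Corollary~\ref{corollary-3.15} exactly by combining Proposition~\ref{proposition-3.14} with Proposition~4 of \cite{Gutik-Mykhalenych-2020}, both of which give isomorphisms onto the semigroup of matrix units $\boldsymbol{\mathcal{B}}_\omega$, whence the composite $\mathfrak{g}^{-1}\circ\mathfrak{f}$ settles the claim. One small correction to your parenthetical gloss: the hypothesis on $F$ in both Proposition~\ref{proposition-3.14} and the corollary is that $F$ is a \emph{singleton} of $\omega$, not merely a nonempty finite set (for, say, $F=\{0,1\}$ one has $(1+F)\cap F\neq\varnothing$, so the map to $\boldsymbol{\mathcal{B}}_\omega$ is not a homomorphism and Proposition~\ref{proposition-3.14} fails) --- but since the corollary's hypothesis is verbatim that of Proposition~\ref{proposition-3.14}, your argument goes through unchanged.
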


Нагадаємо \cite{Lawson-1998}, шо інверсна напівгрупа $S$ з нулем $0$ називається \emph{$0$-біпростою}, якщо $S$ має лише два $\mathscr{D}$-класи: $S\setminus\{0\}$ i $\{0\}$.

Для довільного натурального числа $n$ позначимо $n\omega=\{n\cdot i\colon i\in\omega\}$.

\begin{example}\label{example-3.16}
Зафіксуємо довільні $i_0\in\omega$ та натуральне число $j_0$. Покладемо $\boldsymbol{B}_{\mathbb{Z}}^{(i_0,j_0)}=\boldsymbol{B}_{\mathbb{Z}}^{\mathscr{F}}$, де $\mathscr{F}=\left\{\varnothing,i_0+j_0\omega\right\}$. Тоді, очевидно, що $\mathscr{F}$~--- $\omega$~-замкнена сім'я в  $\mathscr{P}(\omega)$, а також за теоремою~\ref{theorem-3.2} інверсна напівгрупа $\boldsymbol{B}_{\mathbb{Z}}^{(i_0,j_0)}$ є $0$-біпростою. Більше того, для довільного $i_0\in\omega$ за твердженням \ref{proposition-3.11} напівгрупа $\boldsymbol{B}_{\mathbb{Z}}^{(i_0,1)}$ ізоморфна розширеній біциклічній напівгрупі з приєднаним нулем.
\end{example}

Безпосередньо звичайною перевіркою доводиться, що для довільних $i_1,i_2\in\omega$ та довільного натурального числа $j_0$ відображення $\mathfrak{h}\colon \boldsymbol{B}_{\mathbb{Z}}^{(i_1,j_0)}\to \boldsymbol{B}_{\mathbb{Z}}^{(i_2,j_0)}$, означене
\begin{equation*}
  \mathfrak{h}(n,m,i_1+j_0\omega)=(n,m,i_2+j_0\omega) \qquad \hbox{i} \qquad \mathfrak{h}(0)=0
\end{equation*}
є ізоморфізмом. Отже, виконується

\begin{proposition}\label{proposition-3.17}
Для довільних $i_1,i_2\in\omega$ та довільного натурального числа $j_0$ напівгрупи $\boldsymbol{B}_{\mathbb{Z}}^{(i_1,j_0)}$ i $\boldsymbol{B}_{\mathbb{Z}}^{(i_2,j_0)}$ ізоморфні.
\end{proposition}

Наступна теорема описує структуру $0$-біпростих інверсних напівгруп $\boldsymbol{B}_{\mathbb{Z}}^{\mathscr{F}}$ з точністю до ізоморфізму.

\begin{theorem}\label{theorem-3.18}
Нехай $\mathscr{F}$ --- ${\omega}$-замкнена підсім'я в  $\mathscr{P}(\omega)$, $\varnothing\in\mathscr{F}$ i $\boldsymbol{B}_{\mathbb{Z}}^{\mathscr{F}}$~--- $0$-біпроста напівгрупа. Тоді виконується лише одна з умов:
\begin{itemize}
  \item[$(1)$] напівгрупа $\boldsymbol{B}_{\mathbb{Z}}^{\mathscr{F}}$ ізоморфна напівгрупі $\omega{\times}\omega$-матричних одиниць $\boldsymbol{\mathcal{B}}_\omega$;
  \item[$(2)$] напівгрупа $\boldsymbol{B}_{\mathbb{Z}}^{\mathscr{F}}$ ізоморфна напівгрупі $\boldsymbol{B}_{\omega}^{(0,j_0)}$ для деякого натурального числа $j_0$.
\end{itemize}
\end{theorem}

\begin{proof}
За твердженням $\boldsymbol{(iv)}$ теореми \ref{theorem-3.5} сім'я $\mathscr{F}$ містить непорожню множину $F$ та порож\-ню множину $\varnothing$. Припустимо, що множина $F$ скінченна. Тоді, аналогічно, як і в твердженні \ref{proposition-3.14} доводиться, що $F$ --- одноелементна множина, а отже за твердженням \ref{proposition-3.14}, напівгрупа $\boldsymbol{B}_{\mathbb{Z}}^{\mathscr{F}}$ ізоморфна напівгрупі $\omega{\times}\omega$-матричних одиниць $\boldsymbol{\mathcal{B}}_\omega$.

Якщо ж $F$ --- нескінченна множина, то з теореми \ref{theorem-3.5}$\boldsymbol{(iv)}$ випливає, що викону\-ється одна з умов:
\begin{equation*}
(a)~(-1+F)\cap F=F \qquad \hbox{або} \qquad (b)~(-1+F)\cap F=\varnothing.
\end{equation*}

У випадку $(a)$ за твердженням \ref{proposition-3.11} множина $\boldsymbol{B}_{\mathbb{Z}}^{\mathscr{F}}\setminus\{0\}$ є піднапівгрупою в $\boldsymbol{B}_{\mathbb{Z}}^{\mathscr{F}}$, яка ізоморфна розширеній біциклічній напівгрупі, а отже виконується твердження $(2)$.

Якщо ж виконується умова $(b)$, то з леми 7 \cite{Gutik-Mykhalenych-2020} випливає, що $F=i_0+j_0\omega$ для деяких натурального числа $j_0$ та $i_0\in\omega$. Застосувавши твердження \ref{proposition-3.17}, отримуємо, що виконується твердження $(2)$.
\end{proof}

Нагадаємо \cite{Lawson-1998, Petrich-1984}, що \emph{найменша} (\emph{мінімальна}) \emph{групова конгруенція} $\boldsymbol{\sigma}$ на ін\-версній напівгрупі $S$ визначається так:
\begin{equation*}
  s\boldsymbol{\sigma}t \qquad \Longleftrightarrow \qquad es=et \quad \hbox{для деякого} \quad e\in E(S).
\end{equation*}

Очевидно, що, якщо $\mathscr{F}$ --- ${\omega}$-замкнена підсім'я в  $\mathscr{P}(\omega)$ i $\varnothing\in\mathscr{F}$, то напівгрупа $\boldsymbol{B}_{\mathbb{Z}}^{\mathscr{F}}$ містить нуль, а отже фактор-напівгрупа $\boldsymbol{B}_{\mathbb{Z}}^{\mathscr{F}}/\boldsymbol{\sigma}$ ізоморфна тривіальній групі.

\begin{proposition}\label{proposition-3.19}
Нехай $\mathscr{F}$ --- ${\omega}$-замкнена підсім'я в  $\mathscr{P}(\omega)$ i $\varnothing\notin\mathscr{F}$. Тоді  $(i_1,j_1,F_1)\boldsymbol{\sigma}(i_2,j_2,F_2)$ в $\boldsymbol{B}_{\mathbb{Z}}^{\mathscr{F}}$ тоді і тільки тоді, коли $i_1-j_1=i_2-j_2$, а отже фактор-напівгрупа $\boldsymbol{B}_{\mathbb{Z}}^{\mathscr{F}}/\boldsymbol{\sigma}$ ізоморфна адитивній групі цілих чисел $\mathbb{Z}(+)$.
\end{proposition}

\begin{proof}
Нехай $(i_1,j_1,F_1)$ i $(i_2,j_2,F_2)$ --- довільні  елементи напівгрупи $\boldsymbol{B}_{\mathbb{Z}}^{\mathscr{F}}$. З означення найменшої групової конгруенції $\boldsymbol{\sigma}$ випливає, що $(i_1,j_1,F_1)\boldsymbol{\sigma}(i_2,j_2,F_2)$ тоді і тільки тоді, коли існує елемент $(i,j,F)\in\boldsymbol{B}_{\mathbb{Z}}^{\mathscr{F}}$ такий, що $(i,j,F)\preccurlyeq(i_1,j_1,F_1)$ i $(i,j,F)\preccurlyeq(i_2,j_2,F_2)$. Тоді за твердженням~\ref{proposition-3.3} маємо, що
$F\subseteq -k_1+F_1$ i $i-i_1=j-j_1=k_1$, а також
$F\subseteq -k_2+F_2$ i $i-i_2=j-j_2=k_2$
для деяких $k_1,k_2\in\omega$. Отже, з $(i_1,j_1,F_1)\boldsymbol{\sigma}(i_2,j_2,F_2)$ в $\boldsymbol{B}_{\mathbb{Z}}^{\mathscr{F}}$ випливає, що
$
i_1-j_1=i_2-j_2=i-j.
$

Припустимо, що для елементів $(i_1,j_1,F_1)$ i $(i_2,j_2,F_2)$  напівгрупи $\boldsymbol{B}_{\mathbb{Z}}^{\mathscr{F}}$ справджується рівність $i_1-j_1=i_2-j_2$. Не зменшуючи загальності можемо вважати, що $i_1>i_2$. Позаяк $\mathscr{F}$ --- ${\omega}$-замкнена підсім'я в  $\mathscr{P}(\omega)$, то
$
F=F_1\cap (i_2-i_1+F_2)\in \mathscr{F}.
$
Тоді $j_1>j_2$ і за твердженням~\ref{proposition-3.3} маємо, що
\begin{align*}
  (i_1,j_1,F)&=(i_1,j_1,F\cap F_1)=(i_1,i_1,F)\cdot(i_1,j_1,F_1) \preccurlyeq(i_1,j_1,F_1)
\end{align*}
i
\begin{align*}
    (i_1,i_1,F)\cdot (i_2,j_2,F_2)&=(i_1,i_1-i_2+j_2,F\cap(i_2-i_1+F_2))
                                  =(i_1,j_1-j_2+j_2,F\cap(i_2-i_1+F_2))=\\
                                  &=(i_1,j_1,F\cap(i_2-i_1+F_2))
                                  =(i_1,j_1,F)
                                  \preccurlyeq(i_2,j_2,F_2),
\end{align*}
а отже, $(i_1,j_1,F_1)\boldsymbol{\sigma}(i_2,j_2,F_2)$.

Означимо відображення $\mathfrak{h}_{\boldsymbol{\sigma}}\colon \boldsymbol{B}_{\mathbb{Z}}^{\mathscr{F}} \to Z(+)$ за формулою $\mathfrak{h}_{\boldsymbol{\sigma}}(i,j,F)=i-j$. Із доведеного вище  випливає, що $\mathfrak{h}_{\boldsymbol{\sigma}}(i_1,j_1,F_1)=\mathfrak{h}_{\boldsymbol{\sigma}}(i_2,j_2,F_2)$ тоді і лише тоді, коли $(i_1,j_1,F_1)\boldsymbol{\sigma}(i_2,j_2,F_2)$ в $\boldsymbol{B}_{\mathbb{Z}}^{\mathscr{F}}$, а отже відображення $\mathfrak{h}_{\boldsymbol{\sigma}}\colon \boldsymbol{B}_{\mathbb{Z}}^{\mathscr{F}} \to Z(+)$ є гомоморфізмом і фактор-напівгрупа $\boldsymbol{B}_{\mathbb{Z}}^{\mathscr{F}}/\boldsymbol{\sigma}$ ізоморфна адитивній групі цілих чисел $\mathbb{Z}(+)$.
\end{proof}

На завершенні ми опишемо структуру напівгрупи $\boldsymbol{B}_{\mathbb{Z}}^{\mathscr{F}}$, де сім'я $\mathscr{F}$ складається з усіх одноелементних підмножин в $\omega$ та порожньої множини. Зауважимо, що в \cite{Lysetska-2020} і \cite{Gutik-Lysetska-2021?} отримано подібні результати для напівгруп  $\boldsymbol{B}_{\omega}^{\mathscr{F}}$ і $\boldsymbol{B}_{\omega}^{\mathscr{F}_1}$, де сім'ї $\mathscr{F}$ і $\mathscr{F}_1$ складаються з усіх одноелементних підмножин в $\omega$ і порожньої множини та атомарних підмножин у $\omega$, відповідно.

Приймемо
$
  \mathscr{F}_1=\left\{A\subseteq \omega\colon |A|\leqslant 1\right\}.
$
Очевидно, що $\mathscr{F}_1$ --- ${\omega}$-замкнена підсім'я в $\mathscr{P}(\omega)$, а отже $\boldsymbol{B}_{\mathbb{Z}}^{\mathscr{F}_1}$ --- інверсна напівгрупа з нулем. Далі через $(i,j,\{k\})$ будемо позначати ненульовий елемент напівгрупи $\boldsymbol{B}_{\mathbb{Z}}^{\mathscr{F}_1}$ для деяких $i,j\in\mathbb{Z}$ i $k\in\omega$, а через $\boldsymbol{0}$ нуль напівгрупи $\boldsymbol{B}_{\mathbb{Z}}^{\mathscr{F}_1}$.

Скористаємося конструкцією з  \cite{Gutik-1999}.
Нехай $S$ --- напівгрупа та $X$~--- непорожня множина потуж\-ності $\lambda$. На множині
$B_{X}(S)=\left(X\times S\times X\right)\sqcup\{ \mathscr{O}\}$ означимо напівгрупову операцію так:
\begin{equation*}
 (\alpha,s,\beta)\cdot(\gamma, t, \delta)=
\left\{
  \begin{array}{cl}
    (\alpha, st, \delta), & \hbox{якщо~} \beta=\gamma; \\
    \mathscr{O},          & \hbox{якщо~} \beta\ne \gamma
  \end{array}
\right.
\end{equation*}
і $(\alpha, s, \beta)\cdot \mathscr{O}=\mathscr{O}\cdot(\alpha, s, \beta)=\mathscr{O}\cdot \mathscr{O}=\mathscr{O}$, для всіх $\alpha, \beta, \gamma, \delta\in X$ i $s, t\in S$. Якщо $S$ --- моноїд, то напівгрупа $\mathscr{B}_X(S)$ називається {\it $\lambda$-розширенням Брандта напівгрупи} $S$~\cite{Gutik-1999}. Властивості напівгрупи $\mathscr{B}_\lambda(S)$ та її узагальнення  $\lambda^0$-розширення Брандта $\mathscr{B}^0_X(S)$ напівгруп вивчалися в \cite{Gutik-1999, Gutik-Pavlyk-2006, Gutik-Repovs-2010}.

Через $\omega_{\min}$ позначимо множину $\omega$ з бінарною операцією
\begin{equation*}
  xy=\min\{x,y\}, \qquad \hbox{для} \quad x,y\in \mathbb{Z}.
\end{equation*}
Очевидно, що $\omega_{\min}$ -- напівґратка.

Означимо відображення $\mathfrak{f}\colon\boldsymbol{B}_{\mathbb{Z}}^{\mathscr{F}_1}\to\mathscr{B}_{\mathbb{Z}}(\omega_{\min})$ так:
\begin{equation}\label{eq-1}
  \mathfrak{f}(i,j,\{k\})=(i+k,k,j+k) \qquad \hbox{і} \qquad (\boldsymbol{0})\mathfrak{f}=\mathscr{O},
\end{equation}
для $i,j\in\mathbb{Z}$ i $k\in\omega$.

\begin{theorem}\label{theorem-3.20}
Напівгрупа  $\boldsymbol{B}_{\mathbb{Z}}^{\mathscr{F}_1}$ ізоморфна напівгрупі $\mathscr{B}_{\mathbb{Z}}(\omega_{\min})$ стосовно відображення $\mathfrak{f}$.
\end{theorem}

\begin{proof}
Очевидно, що відображення $\mathfrak{f}$, визначене за формулою \eqref{eq-1}, є бієктивним.

Зафіксуємо довільні $(i_1,j_1,\{k_1\}),(i_2,j_2,\{k_2\})\in \boldsymbol{B}_{\mathbb{Z}}^{\mathscr{F}_1}$. Тоді
\begin{align*}
  \mathfrak{f}((i_1,j_1,\{k_1\})&\cdot(i_2,j_2,\{k_2\}))=\\
&=\left\{
  \begin{array}{cl}
    \mathfrak{f}(i_1-j_1+i_2,j_2,(j_1-i_2+\{k_1\})\cap\{k_2\}), & \hbox{якщо~} j_1<i_2 \hbox{~і~} j_1+k_1=i_2+k_2;\\
    \mathfrak{f}(\boldsymbol{0}),                               & \hbox{якщо~} j_1<i_2 \hbox{~і~} j_1+k_1\neq i_2+k_2;\\
    \mathfrak{f}(i_1,j_2,\{k_1\}\cap\{k_2\}),                   & \hbox{якщо~} j_1=i_2 \hbox{~і~} k_1=k_2;\\
    \mathfrak{f}(\boldsymbol{0}),                               & \hbox{якщо~} j_1=i_2 \hbox{~і~} k_1\neq k_2;\\
    \mathfrak{f}(i_1,j_1-i_2+j_2,\{k_1\}\cap(i_2-j_1+\{k_2\})), & \hbox{якщо~} j_1>i_2 \hbox{~і~} j_1+k_1=i_2+k_2;\\
    \mathfrak{f}(\boldsymbol{0}),                               & \hbox{якщо~} j_1>i_2 \hbox{~і~} j_1+k_1\neq i_2+k_2
  \end{array}
\right. =\\
    & =\left\{
  \begin{array}{cl}
    \mathfrak{f}(i_1-j_1+i_2,j_2,\{k_2\}), & \hbox{якщо~} j_1<i_2 \hbox{~і~} j_1+k_1=i_2+k_2;\\
    \mathfrak{f}(i_1,j_2,\{k_1\}),         & \hbox{якщо~} j_1=i_2 \hbox{~і~} k_1=k_2;\\
    \mathfrak{f}(i_1,j_1-i_2+j_2,\{k_1\}), & \hbox{якщо~} j_1>i_2 \hbox{~і~} j_1+k_1=i_2+k_2;\\
    \mathfrak{f(\boldsymbol{0})},          & \hbox{якщо~} j_1+k_1\neq i_2+k_2
  \end{array}
\right.=\\
    & =\left\{
  \begin{array}{cl}
    (i_1-j_1+i_2+k_2,k_2,j_2+k_2), & \hbox{якщо~} j_1<i_2 \hbox{~і~} j_1+k_1=i_2+k_2;\\
    (i_1+k_1,k_1,j_2+k_1),         & \hbox{якщо~} j_1=i_2 \hbox{~і~} k_1=k_2;\\
    (i_1+k_1,k_1,j_1-i_2+j_2+k_1), & \hbox{якщо~} j_1>i_2 \hbox{~і~} j_1+k_1=i_2+k_2;\\
    \mathscr{O},                   & \hbox{якщо~} j_1+k_1\neq i_2+k_2
  \end{array}
\right.=\\
    & =\left\{
  \begin{array}{cl}
    (i_1+k_1,k_2,j_2+k_2), & \hbox{якщо~} j_1<i_2 \hbox{~і~} j_1+k_1=i_2+k_2;\\
    (i_1+k_1,k_1,j_2+k_2), & \hbox{якщо~} j_1=i_2 \hbox{~і~} k_1=k_2;\\
    (i_1+k_1,k_1,j_2+k_2), & \hbox{якщо~} j_1>i_2 \hbox{~і~} j_1+k_1=i_2+k_2;\\
    \mathscr{O},           & \hbox{якщо~} j_1+k_1\neq i_2+k_2,
  \end{array}
\right.
\end{align*}
і
\begin{align*}
  \mathfrak{f}(i_1,j_1,\{k_1\})\cdot\mathfrak{f}(i_2,j_2,\{k_2\})&=(i_1+k_1,k_1,j_1+k_1)\cdot(i_2+k_2,k_2,j_2+k_2)=\\
&=
\left\{
  \begin{array}{cl}
    (i_1+k_1,\min\{k_1,k_2\},j_2+k_2), & \hbox{якщо~} j_1+k_1=i_2+k_2; \\
    \mathscr{O},                       & \hbox{якщо~} j_1+k_1\neq i_2+k_2
  \end{array}
\right.=
\end{align*}
\begin{align*}
    & =\left\{
  \begin{array}{cl}
    (i_1+k_1,k_2,j_2+k_2), & \hbox{якщо~} k_2<k_1 \hbox{~і~} j_1+k_1=i_2+k_2;\\
    (i_1+k_1,k_1,j_2+k_2), & \hbox{якщо~} k_2=k_1 \hbox{~і~} j_1=j_2;\\
    (i_1+k_1,k_1,j_2+k_2), & \hbox{якщо~} k_2>k_1 \hbox{~і~} j_1+k_1=i_2+k_2;\\
    \mathscr{O},           & \hbox{якщо~} j_1+k_1\neq i_2+k_2,
  \end{array}
\right.=\\
    & =\left\{
  \begin{array}{cl}
    (i_1+k_1,k_2,j_2+k_2), & \hbox{якщо~} j_1<i_2 \hbox{~і~} j_1+k_1=i_2+k_2;\\
    (i_1+k_1,k_1,j_2+k_2), & \hbox{якщо~} j_1=i_2 \hbox{~і~} k_1=k_2;\\
    (i_1+k_1,k_1,j_2+k_2), & \hbox{якщо~} j_1>i_2 \hbox{~і~} j_1+k_1=i_2+k_2;\\
    \mathscr{O},           & \hbox{якщо~} j_1+k_1\neq i_2+k_2.
  \end{array}
\right.
\end{align*}
Оскільки $\boldsymbol{0}$ і $\mathscr{O}$ є нулями напівгруп $\boldsymbol{B}_{\mathbb{Z}}^{\mathscr{F}_1}$ і $\mathscr{B}_{\mathbb{Z}}(\omega_{\min})$, відповідно, то з вище наведеного випливає, що відображення
$\mathfrak{f}\colon\boldsymbol{B}_{\mathbb{Z}}^{\mathscr{F}_1}\to\mathscr{B}_{\mathbb{Z}}(\omega_{\min})$ є ізоморфізмом.
\end{proof}


\end{document}